\documentclass[11pt,reqno,a4paper]{amsart}
\usepackage[T1]{fontenc}
\usepackage[utf8]{inputenc}
\usepackage{fbb}
\usepackage{amsmath,amssymb,amsthm,mathtools,microtype}
\usepackage{enumitem}
\usepackage[top=2.5cm,bottom=2.4cm,left=2.6cm,right=2.6cm]{geometry}
\usepackage{xcolor}
\usepackage{xr-hyper}
\usepackage[colorlinks=true,linkcolor=blue!45!green,citecolor=blue!45!green,urlcolor=blue!45!green]{hyperref}
\hypersetup{pdftitle={Gagliardo--Nirenberg extremals on manifolds with small holes},pdfauthor={Mayukh Mukherjee and Utsab Sarkar}}
\allowdisplaybreaks
\numberwithin{equation}{section}
\theoremstyle{plain}
\newtheorem{theorem}{Theorem}[section]
\newtheorem{introtheorem}{Theorem}

\newtheorem{lemma}[theorem]{Lemma}
\newtheorem{proposition}[theorem]{Proposition}
\newtheorem{corollary}[theorem]{Corollary}
\theoremstyle{definition}

\theoremstyle{remark}
\newtheorem{remark}[theorem]{Remark}

\title[Gagliardo--Nirenberg extremals on manifolds with small holes]{Gagliardo--Nirenberg extremals on manifolds with small holes}
\author{Mayukh Mukherjee}
\address{Department of Mathematics, Indian Institute of Technology Bombay, Powai, Mumbai 400076, India}
\email{mathmukherjee@gmail.com, mukherjee@math.iitb.ac.in}
\author{Utsab Sarkar}
\address{Department of Mathematics, Indian Institute of Technology Bombay, Powai, Mumbai 400076, India}
\email{utsab@math.iitb.ac.in, reachutsab@gmail.com}
\date{}
\subjclass[2020]{35A23, 35J61, 35Q55, 53C21, 58J50}
\keywords{Gagliardo--Nirenberg inequality, small holes, uniqueness, quantitative stability, nonlinear Schr\"odinger equation}

\begin{document}
\begin{abstract}
We study the sharp homogeneous Dirichlet Gagliardo--Nirenberg quotient on a closed Riemannian manifold with holes of small capacity and connected complements. For each fixed exponent $2<q<2^*$, the nonnegative $L^2$-normalized extremal exists and is unique when the first Dirichlet eigenvalue $\lambda_D$ is sufficiently small. If the ambient volume is $V$ and $\beta=n(q-2)/2$, the sharp constant satisfies $C_D=V^{1-q/2}\lambda_D^{-\beta/2}(1+O(\lambda_D))$. For the normalized first eigenfunction $\varphi_D$, we obtain $\|u_D-\varphi_D\|_{H^1}=O(\lambda_D^{3/2})$ and $C_D/W_{M_D}(\varphi_D)=1+O(\lambda_D^2)$, together with a quantitative stability estimate in terms of the relative deficit. The constants depend only on the ambient manifold and the exponent. On general compact manifolds with Dirichlet boundary conditions, we prove compactness above the Euclidean sharp constant and describe concentration at equality. Positive scalar curvature at one interior point implies attainment. For the $L^2$-critical nonlinear Schr\"odinger energy, the mass threshold for negative energy tends to zero as the capacity of the holes tends to zero, whereas the threshold for unboundedness below is unchanged.
\end{abstract}
\maketitle
\section{Introduction}
\label{gn:sec:introduction}

The sharp homogeneous Dirichlet Gagliardo--Nirenberg constant on a bounded Euclidean domain equals its whole-space value and is not attained. On a closed manifold with a smooth hole of sufficiently small capacity and connected complement, we prove that the supremum is attained by a unique nonnegative $L^2$-normalized extremal. We determine the asymptotics of the sharp constant, estimate the difference between the extremal and the first Dirichlet eigenfunction, and prove a stability inequality (Theorem~\ref{gn:thm:small-holes}).

Let $(M^n,g)$ be a smooth compact connected Riemannian manifold with smooth nonempty boundary. Throughout the paper,
\begin{equation}\label{gn:eq:range}
 n\ge2,\qquad 2<q<2^*,\qquad
 2^*=\begin{cases}2n/(n-2),&n\ge3,\\ \infty,&n=2,\end{cases}
\end{equation}
and
\begin{equation}\label{gn:eq:exponents}
 \beta=\frac{n(q-2)}2,\qquad \alpha=q-\beta>0.
\end{equation}
For $u\in H^1_0(M)\setminus\{0\}$, define
\begin{equation}\label{gn:eq:quotient}
 W_M(u)=\frac{\|u\|_{L^q(M)}^q}{\|u\|_{L^2(M)}^\alpha\|\nabla u\|_{L^2(M)}^\beta},
 \qquad C_M=\sup_{u\in H^1_0(M)\setminus\{0\}}W_M(u).
\end{equation}
We write $C_*=C_{\mathbb R^n}$ for the Euclidean sharp constant. Functions are real-valued unless stated otherwise. Norms and integrals on a manifold use its Riemannian volume unless another measure is displayed.

\subsection{Extremals for small holes}

Fix a smooth closed connected manifold $(N^n,g)$ of volume $V$. A hole $D\subset N$ is a nonempty smooth open set with proper closure, and $M_D=N\setminus\overline D$ carries the Dirichlet condition on $\partial D$. Put $C_D=C_{M_D}$ and $\lambda_D=\lambda_1(M_D)$. When $M_D$ is connected, let $\varphi_D>0$ be its first Dirichlet eigenfunction, normalized by $\|\varphi_D\|_2=1$. A tilde denotes zero extension to $N$. We use the mean-zero capacity
\begin{equation}\label{gn:eq:capacity-definition-intro}
 \operatorname{cap}_N(D)=
 \inf\left\{\int_N|\nabla v|^2:\ v\in H^1(N),\ v=1\ \hbox{a.e. on }D,\ \int_Nv=0\right\}.
\end{equation}
The mean-zero condition excludes nonzero constant functions.

For $f\in H^1_0(M_D)$ with $\|f\|_2=1$, define the relative deficit by
\begin{equation}\label{gn:eq:relative-deficit}
 \delta_D(f)=1-\frac{W_{M_D}(f)}{C_D}.
\end{equation}

\begin{introtheorem}[Uniqueness, stability, and spectral asymptotics]\label{gn:thm:small-holes}
There are $\lambda_0>0$, $\delta_0\in(0,1/2)$, and $K<\infty$, depending only on $N$ and $q$, such that the following holds for every smooth hole $D$ with connected complement and $0<\lambda_D<\lambda_0$.

One has $C_D>C_*$, and there is a unique nonnegative maximizer $u_D$ with $\|u_D\|_2=1$. As $\lambda_D\to0$,
\begin{equation}\label{gn:eq:spectral-equivalent}
 C_D=V^{1-q/2}\lambda_D^{-\beta/2}\bigl(1+O(\lambda_D)\bigr),
\end{equation}
and
\begin{align}
 \|\widetilde u_D-\widetilde\varphi_D\|_{H^1(N)}
 &=O(\lambda_D^{3/2}),\label{gn:eq:eigenfunction-approximation}\\
 \frac{C_D}{W_{M_D}(\varphi_D)}
 &=1+O(\lambda_D^2),\label{gn:eq:eigenfunction-quotient}\\
 \|\nabla u_D\|_2^2
 &=\lambda_D+O(\lambda_D^3),\label{gn:eq:energy-asymptotic}\\
 \|\widetilde u_D-V^{-1/2}\|_{H^1(N)}
 &=O(\lambda_D^{1/2}).\label{gn:eq:constant-approximation}
\end{align}
Every nonnegative $f\in H^1_0(M_D)$ with $\|f\|_2=1$ and $\delta_D(f)\le\delta_0$ satisfies
\begin{equation}\label{gn:eq:deficit-stability}
 \|\widetilde f-\widetilde u_D\|_{H^1(N)}^2
 \le K\lambda_D\,\delta_D(f).
\end{equation}
For a family of such holes, $\lambda_D\to0$ if and only if $\operatorname{cap}_N(D)\to0$, and
\begin{equation}\label{gn:eq:capacity-equivalent}
 C_D=V^{1-\alpha/2}\operatorname{cap}_N(D)^{-\beta/2}
 \bigl(1+O(\operatorname{cap}_N(D))\bigr).
\end{equation}
All error bounds are uniform over the holes.
\end{introtheorem}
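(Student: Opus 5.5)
Since $\lambda_D$ is small, $M_D$ behaves like $N$ with an almost constant low mode, so I would treat the problem as a perturbation of constants. Everything is organised around the spectral gap. Let $\mu_2(N)>0$ be the first nonzero eigenvalue of $N$; by domain monotonicity the second Dirichlet eigenvalue satisfies $\lambda_2(M_D)\ge\mu_2(N)$, a bound independent of $D$.

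\emph{Step 1 (spectral and capacity preliminaries).} Write $\widetilde\varphi_D=c+w$ with $c$ constant and $\int_N w=0$. Poincar\'e on $N$ gives $\|w\|_{H^1}^2\lesssim\lambda_D$, so $\|\widetilde\varphi_D-V^{-1/2}\|_{H^1(N)}=O(\lambda_D^{1/2})$. The competitor $1-v$, with $v$ near-optimal for \eqref{gn:eq:capacity-definition-intro}, gives $\lambda_D\le \operatorname{cap}_N(D)\,V^{-1}(1+O(\operatorname{cap}))$. Conversely, $1-\widetilde\varphi_D/c$ (after subtracting its mean) is admissible for the capacity and gives $\operatorname{cap}_N(D)\le V\lambda_D(1+O(\lambda_D))$. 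Together these yield $\lambda_D\sim\operatorname{cap}_N(D)/V$, which is the equivalence claimed in the statement, and they also turn \eqref{gn:eq:spectral-equivalent} into \eqref{gn:eq:capacity-equivalent}. I expect the two-sided $1+O(\cdot)$ precision to need a sharper comparison, probably through the harmonic capacitary potential.

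\emph{Step 2 (lower bound and a priori localisation).} The test function $\varphi_D$ gives $C_D\ge W(\varphi_D)=\|\varphi_D\|_q^q\lambda_D^{-\beta/2}=V^{1-q/2}\lambda_D^{-\beta/2}(1+O(\lambda_D))$, using $\|\varphi_D\|_q^q=V^{1-q/2}(1+O(\lambda_D))$ from Step 1 and Sobolev embedding. This blows up, so it exceeds $C_*$. Existence then follows from the compactness-above-$C_*$ result proved later in the paper; I assume it here, or else prove it by a concentration-compactness argument. For the upper bound, take $\|f\|_2=1$ and decompose $f=a\varphi_D+h$ with $h\perp\varphi_D$. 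Then $\|\nabla f\|^2\ge\lambda_D a^2+\mu_2\|h\|^2$, and $\|h\|_{H^1}^2\lesssim\|\nabla f\|^2$. Any $f$ with $W(f)\ge(1-\delta_0)C_D$ must have $\|\nabla f\|_2^2\lesssim\lambda_D$. Indeed, otherwise $W\le C_N^{GN}$-type bounds, with $\|f\|_q\lesssim\|f\|_{H^1}$, give $W\lesssim \|\nabla f\|^{q-\beta}$ or a bound of order $\|\nabla f\|^{-\beta}$, which is far below $\lambda_D^{-\beta/2}$. Hence $\|h\|_{H^1}^2\lesssim\lambda_D$.

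\emph{Step 3 (second-order expansion; main obstacle).} Set $E=\|\nabla f\|^2=\lambda_D a^2+\|\nabla h\|^2$ with $a^2=1-\|h\|^2$, and expand $\|f\|_q^q$ around $a\varphi_D$. The linear term $q\int\varphi_D^{q-1}h$ is the crux. Write $\varphi_D^{q-1}=c^{q-1}+O(w)$. Since $\int\varphi_D h=0$ and $\varphi_D=c+w$, we get $\int h=-c^{-1}\int wh$. Consequently the linear term is $O(\|w\|_{H^1}\|h\|_{H^1})=O(\lambda_D^{1/2}\|h\|_{H^1})$, while the quadratic terms are $O(\|h\|^2)$. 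Meanwhile $\log W$ loses $\tfrac\beta2\log(E/\lambda_D)\ge \tfrac\beta2\,\mu_2\|h\|^2/\lambda_D\cdot(1+o(1))$. The net result is
\[
\log W(f)-\log W(\varphi_D)\le C\lambda_D^{1/2}\|h\|_{H^1}-c_0\lambda_D^{-1}\|h\|_{H^1}^2 .
\]
Maximising over $\|h\|_{H^1}$ shows that extremals have $\|h\|_{H^1}=O(\lambda_D^{3/2})$, so $C_D/W(\varphi_D)=1+O(\lambda_D^2)$. This gives \eqref{gn:eq:eigenfunction-approximation} and \eqref{gn:eq:eigenfunction-quotient}. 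Combining with Step 2 yields \eqref{gn:eq:spectral-equivalent}, and \eqref{gn:eq:energy-asymptotic} follows because $E-\lambda_D=O(\|h\|^2_{H^1})$ up to the $\lambda_D a^2$ correction, which is $O(\lambda_D\|h\|^2)$. The constant approximation \eqref{gn:eq:constant-approximation} follows from Step 1. The difficulty is making this expansion uniform in $D$ using only $N$-Sobolev constants via zero extension. That is fine because $H^1_0(M_D)\subset H^1(N)$ isometrically.

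\emph{Step 4 (uniqueness and stability).} The functional $F(h)=\log W(a(h)\varphi_D+h)$ on the ball $\|h\|_{H^1}\lesssim\lambda_D^{1/2}$ is strongly concave with modulus $\gtrsim\lambda_D^{-1}$ in $\|h\|_{H^1}^2$: its Hessian is $-\beta\|\nabla h\|^2/\lambda_D+O(\|h\|^2)$. Two nonnegative normalised maximisers are both critical points of $F$ within this ball, so strong concavity forces them to coincide. Using $u_D$ as the base point, strong concavity also gives $\delta_D(f)\gtrsim\lambda_D^{-1}\|\widetilde f-\widetilde u_D\|_{H^1}^2$, which is \eqref{gn:eq:deficit-stability}. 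Nonnegativity is used to exclude the sign-flipped branch $-\varphi_D$.
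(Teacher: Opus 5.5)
Your Steps 1--3 are essentially the paper's argument: the same splitting $f=a\varphi_D+h$, the same ambient gap $\lambda_2(M_D)\ge\Lambda$, and the same mean-zero cancellation giving a first variation of size $\lambda_D^{1/2}\|h\|_{H^1}$. The paper integrates the Hessian from $0$ to $z_D$; reading off $\|h\|_{H^1}=O(\lambda_D^{3/2})$ and $C_D/W_{M_D}(\varphi_D)=1+O(\lambda_D^2)$ directly from your Step~3 upper bound instead is a valid shortcut.

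\textbf{The gap is in Step~4.} The strong concavity you assert on the ball $\|h\|_{H^1}\lesssim\lambda_D^{1/2}$ is false, and the Hessian $-\beta\|\nabla h\|_2^2/\lambda_D+O(\|h\|^2)$ you quote holds only at $h=0$. Put $B_D(h,k)=\int_N\nabla h\cdot\nabla k-\lambda_D\int_N hk$ and $t=B_D(h,h)$. Then $\|\nabla(a(h)\varphi_D+h)\|_2^2=\lambda_D+t$ exactly, so the energy part of your $F$ at $h$ has second derivative
\[
 -\frac{\beta B_D(k,k)}{\lambda_D+t}+\frac{2\beta B_D(h,k)^2}{(\lambda_D+t)^2}.
\]
In the direction $k=h$ this equals $\beta t(t-\lambda_D)/(\lambda_D+t)^2$. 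For $t>\lambda_D$ (e.g.\ $t=2\lambda_D$) this is positive and of order $\lambda_D^{-1}\|k\|_{H^1}^2$. The nonlinear part contributes only $O(\|k\|_{H^1}^2)$ and cannot compensate. So $F$ is strictly convex along rays in that part of your ball. Step~2 only places near-maximizers in $\{B_D(h,h)\le K\lambda_D\}$ with an uncontrolled $K$. Uniqueness and \eqref{gn:eq:deficit-stability} therefore do not follow as written.

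\textbf{What is missing is a sharper localization.} On the convex set $\{B_D(h,h)<\lambda_D/2\}$, Cauchy--Schwarz $B_D(h,k)^2\le tB_D(k,k)$ bounds the display above by $-\tfrac{2\beta}{9\lambda_D}B_D(k,k)$. You must show that every nonnegative maximizer, and every $f$ with $\delta_D(f)\le\delta_0$, satisfies $B_D(h,h)<\lambda_D/4$, say.

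The paper gets this from \eqref{gn:eq:near-maximizer-energy}, $\|\nabla f\|_2^2\le\lambda_D(1+K(\delta+\lambda_D))$. That bound holds because the upper bound $\|f\|_q^q\le V^{1-q/2}+K\|\nabla f\|_2^2$ and the lower bound $C_D\ge V^{1-q/2}\lambda_D^{-\beta/2}$ share the same leading constant. Combined with \eqref{gn:eq:exact-sphere-energy} and small $\delta_0,\lambda_0$, it gives the claim.

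Your own Step~3 inequality also gives it. It is valid on the Step~2 ball, with $c_0$ depending on $K$, since $\log(1+x)\ge c_Kx$ for $0\le x\le K$. Then $\log W(f)\ge\log W(\varphi_D)+\log(1-\delta)$ yields $\|h\|_{H^1}^2\lesssim\lambda_D(\delta+\lambda_D^2)$. With this added, Step~4 goes through as in the paper.

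Two minor points:
\begin{itemize}
\item In Step~2, Sobolev alone only gives $W\lesssim\|\nabla f\|_2^{\alpha}$ for large gradients, which does not rule out large energy. You need the Gagliardo--Nirenberg inequality on $N$ with a lower-order term, $W\le K(1+\|\nabla f\|_2^{-\beta})$.
\item In Step~1, your two bounds already give $\lambda_D=V^{-1}\operatorname{cap}_N(D)\bigl(1+O(\operatorname{cap}_N(D))\bigr)$, so no capacitary-potential refinement is needed. The mean-zero constraint even makes $\lambda_D\le\operatorname{cap}_N(D)/V$ exact.
\end{itemize}
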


The constants in Theorem~\ref{gn:thm:small-holes} are independent of curvature and collar bounds for $\partial D$.

\subsection{Compactness at the Euclidean threshold}

Let $B_{x,\ell,R}$ be the cutoff rescaling defined in \eqref{gn:eq:cutoff-bubble} from a Euclidean optimizer satisfying $\|Q_0\|_2=\|\nabla Q_0\|_2=1$.

\begin{introtheorem}[Threshold compactness]\label{gn:thm:threshold}
Under \eqref{gn:eq:range}, one has $C_*\le C_M<\infty$. If $C_M>C_*$, every maximizing sequence normalized by $\|u_k\|_2=1$ is precompact in $H^1_0(M)$, and $C_M$ is attained by a function positive in $M^\circ$.

If $C_M=C_*$, every such maximizing sequence has a subsequence that either converges strongly in $H^1_0(M)$ to an extremal, or satisfies
\begin{equation}\label{gn:eq:threshold-length}
 \ell_k=\|\nabla u_k\|_2^{-1}\longrightarrow0.
\end{equation}
In the latter case there are $x_k\in M^\circ$, $R_k\to\infty$, and signs $s_k\in\{-1,1\}$ such that $\ell_kR_k\to0$, the functions $B_{x_k,\ell_k,R_k}$ are supported in $M^\circ$, and
\begin{equation}\label{gn:eq:single-bubble}
 \|u_k-s_kB_{x_k,\ell_k,R_k}\|_2
 +\ell_k\|\nabla(u_k-s_kB_{x_k,\ell_k,R_k})\|_2\longrightarrow0.
\end{equation}
Their centers satisfy
\begin{equation}\label{gn:eq:boundary-repulsion}
 \frac{d_g(x_k,\partial M)}{\ell_k}\longrightarrow\infty.
\end{equation}
For every smooth bounded domain $\Omega\subset\mathbb R^n$, one has $C_\Omega=C_*$, and the supremum is not attained.
\end{introtheorem}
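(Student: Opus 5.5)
The plan is a concentration--compactness argument on the compact manifold, made quantitative by localizing the Euclidean sharp inequality in small geodesic balls.

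\textbf{Lower bound and finiteness.} To get $C_*\le C_M$, I take an interior point $x$ and the cutoff bubbles $B_{x,\ell,R}$ with $\ell\to0$ and $\ell R\to0$. In normal coordinates the metric is $\delta+O(|y|^2)$. The cutoff errors are $o(1)$ as $R\to\infty$ because $Q_0$ decays exponentially. This gives $W_M(B_{x,\ell,R})\to C_*$.

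For finiteness, I cover $M$ by finitely many charts, using half-ball charts near $\partial M$ together with zero extension, and take a partition of unity $\{\chi_i^2\}$. In each chart, bi-Lipschitz comparison with the Euclidean inequality gives
\[
\|u\|_q^q\le C\|u\|_2^\alpha(\|\nabla u\|_2+\|u\|_2)^\beta .
\]
Since $\lambda_1(M)>0$ under the Dirichlet condition, $\|u\|_2\le C\|\nabla u\|_2$, so $C_M<\infty$.

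\textbf{Localized sharp inequality.} Fix $\varepsilon>0$. I want $\rho>0$ and $K_\varepsilon$ such that for all $u\in H^1_0(M)$,
\[
\|u\|_q^q\le (C_*+\varepsilon)\|u\|_2^\alpha\|\nabla u\|_2^\beta+K_\varepsilon\|u\|_2^{q}.
\]
To prove it, I use a partition $\sum\chi_i^2=1$ subordinate to balls of radius $\rho$, on which the metric is $\varepsilon$-close to Euclidean.

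1. Since $q>2$, one has $\sum_i\|\chi_i u\|_2^2=\|u\|_2^2$ and $\sum_i|\chi_iu|^q\ge|u|^q$ pointwise (for $q/2>1$, $\sum a_i^{q/2}\le(\sum a_i)^{q/2}$ goes the wrong way). So I instead bound $\|u\|_q^q=\int(\sum_i\chi_i^2u^2)^{q/2}$, handled by the standard Lions/Aubin device.
2. Apply the Euclidean sharp inequality to each $\chi_iu$.
3. Recombine using Hölder on the sum over $i$ with exponents suited to $\alpha/2+\beta/2=q/2>1$.
4. Absorb the gradients of the $\chi_i$ into the lower-order term.

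\textbf{Case $C_M>C_*$.} Take $\|u_k\|_2=1$ maximizing and suppose $\|\nabla u_k\|_2\to\infty$. Dividing the localized inequality by $\|\nabla u_k\|_2^\beta$ gives $W_M(u_k)\le C_*+\varepsilon+o(1)$, a contradiction. Hence $u_k$ is bounded in $H^1_0$. By Rellich and the subcritical range, a subsequence has $u_k\to u$ strongly in $L^2$ and $L^q$, so $u\neq0$. Weak lower semicontinuity of $\|\nabla u\|_2$ shows $u$ is a maximizer with $\|\nabla u_k\|_2\to\|\nabla u\|_2$ (otherwise $W(u)>C_M$), which gives strong $H^1$ convergence. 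Replacing $u$ by $|u|$ and using the Euler--Lagrange equation
\[
-\Delta u+a u=b u^{q-1},
\]
the strong maximum principle gives positivity in $M^\circ$.

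\textbf{Case $C_M=C_*$.} If $\ell_k\not\to0$, the same argument gives strong convergence. Otherwise I rescale around a point where the $L^q$ mass concentrates: set $v_k(y)=\ell_k^{n/2}u_k(\exp_{x_k}(\ell_ky))$, which is normalized in $L^2$ and $\dot H^1$ up to $o(1)$.

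1. Apply Lions' concentration--compactness to $v_k$. Vanishing is excluded because $\|v_k\|_q$ stays bounded below; the lower bound comes from $W\to C_*$ and Lions' lemma applied to the localized pieces.
2. Dichotomy is excluded by the strict superadditivity of the Euclidean problem, again because $\alpha/2+\beta/2>1$.
3. This yields compactness up to translation. The limit is a Euclidean optimizer, equal to $\pm Q_0$ by uniqueness of optimizers up to symmetries; the normalization fixes the scale.
4. Tightness then gives \eqref{gn:eq:single-bubble} for a suitable slowly growing $R_k$.

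\textbf{Boundary repulsion.} If $d(x_k,\partial M)/\ell_k$ stayed bounded, the blow-up limit would be an optimizer in $H^1_0$ of a half-space. But the half-space constant equals $C_*$ and is not attained, since by reflection an optimizer would be a whole-space optimizer vanishing on a hyperplane, contradicting positivity of $Q_0$.

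\textbf{Euclidean domains.} For $\Omega\subset\mathbb R^n$, scaling $x\mapsto x_0+\ell x$ and domain monotonicity give $C_\Omega=C_*$. Non-attainment follows because a maximizer extended by zero would be a whole-space optimizer vanishing on an open set, contradicting positivity.

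\textbf{Main obstacle.} I expect the hardest step to be the localized inequality with constant exactly $C_*+\varepsilon$, together with excluding dichotomy in the blow-up. Both rely on the superadditivity coming from $q>2$, and the recombination over the partition must be done carefully.
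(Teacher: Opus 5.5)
Your overall route is the paper's. It builds an almost-sharp inequality from a partition with $\sum_i\chi_i^2=1$ and uses the direct method when $C_M>C_*$. At the threshold it extracts a nonzero blow-up profile whose mass and energy fractions are forced to $m=e=1$ by the strict inequality $m^{\alpha/2}e^{\beta/2}+(1-m)^{\alpha/2}(1-e)^{\beta/2}<1$, and then excludes the half-space limit. Two steps, however, are wrong as written.

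\textbf{Half-space non-attainment.} The reflection argument fails. If $w\in H^1_0(\mathbb R^n_+)$ and $\tilde w$ is its even or odd reflection, all three integrals double. Hence $W(\tilde w)=2^{1-q/2}W(w)<C_*$ because $q>2$, so the reflection is not a whole-space optimizer. Use zero extension instead, which preserves all three norms. A half-space optimizer would then extend to a whole-space optimizer vanishing on an open half-space, contradicting the fact that the Euclidean optimizers $aQ(b(\cdot-y_0))$ vanish nowhere. This is the argument you already give for bounded domains.

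\textbf{Localization step.} Your pointwise claim $\sum_i|\chi_iu|^q\ge|u|^q$ is false; the reverse inequality holds. Superadditivity $\alpha/2+\beta/2>1$ is also not what makes the recombination work. The correct chain has four steps.
\begin{enumerate}
\item Minkowski in $L^{q/2}$ gives $\|u\|_q^2=\|\sum_i\chi_i^2u^2\|_{q/2}\le\sum_i\|\chi_iu\|_q^2$.
\item The local sharp inequality gives $\|\chi_iu\|_q^2\le(C_*+\delta)^{2/q}M_i^{\alpha/q}E_i^{\beta/q}$, with $M_i=\|\chi_iu\|_2^2$ and $E_i=\|\nabla(\chi_iu)\|_2^2$.
\item H\"older with the conjugate exponents $q/\alpha$ and $q/\beta$ (since $\alpha/q+\beta/q=1$) gives $\sum_iM_i^{\alpha/q}E_i^{\beta/q}\le(\sum_iM_i)^{\alpha/q}(\sum_iE_i)^{\beta/q}$.
\item Use the exact identities $\sum_iM_i=\|u\|_2^2$ and $\sum_iE_i=\|\nabla u\|_2^2+\int(\sum_i|\nabla\chi_i|^2)u^2$, the cross term vanishing because $\sum_i\chi_i\nabla\chi_i=0$. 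Finish with $(X+Y)^{\beta/2}\le(1+\rho)X^{\beta/2}+C_\rho Y^{\beta/2}$.
\end{enumerate}
Superadditivity is needed only in the dichotomy exclusion.

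\textbf{Threshold argument.} Your boundary-repulsion step assumes the splitting argument has already been run for the Fermi-chart profile in $H^1_0(\mathbb R^n_+)$, so that this profile has $m=e=1$ and is a half-space optimizer. Your $\exp_{x_k}$ rescaling does not cover that case, so extract the profile in whichever model applies. The paper treats both cases together before discarding the half-space one. In that splitting, the remainder $u_k-w_k$ may sit anywhere on $M$, including near $\partial M$. It must therefore be bounded by your almost-sharp inequality on $M$, not by a Euclidean inequality.
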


In the noncompact alternative, the centers may approach $\partial M$, but their distance from the boundary divided by $\ell_k$ tends to infinity. The limiting profile is therefore defined on $\mathbb R^n$.

\subsection{Scalar curvature and attainment}

Let $Q\in H^1(\mathbb R^n)$ be the positive radial solution, centered at the origin, of
\begin{equation}\label{gn:eq:ground-state}
 -\Delta Q+Q=Q^{q-1}\quad\hbox{in }\mathbb R^n.
\end{equation}
Exponential decay follows from the radial differential equation as in \cite[p.~155]{Strauss1977}; interior elliptic estimates give the same decay for the gradient. Thus $Q(y)+|\nabla Q(y)|\le Ce^{-c|y|}$ for some $C,c>0$. The test functions $U_{x,\varepsilon}$ are defined in \eqref{gn:eq:curvature-bubble}.

\begin{introtheorem}[Positive scalar curvature]\label{gn:thm:curvature}
If $\operatorname{Scal}_g(x)>0$ at one interior point, then $C_M>C_*$ and the supremum is attained. More precisely, uniformly for $x\in K\Subset M^\circ$,
\begin{equation}\label{gn:eq:curvature-expansion}
 W_M(U_{x,\varepsilon})
 =C_*\left(1+A_{n,q}\operatorname{Scal}_g(x)\varepsilon^2+O_K(\varepsilon^3)\right),
\end{equation}
where
\begin{equation}\label{gn:eq:positive-coefficient}
 A_{n,q}=\frac{\alpha}{12}
 +\frac{q-2}{12n}\frac{\displaystyle\int_{\mathbb R^n}|y|^2Q^q\,dy}
 {\displaystyle\int_{\mathbb R^n}Q^q\,dy}>0.
\end{equation}
Consequently, $C_M=C_*$ implies $\operatorname{Scal}_g\le0$ in $M^\circ$.
\end{introtheorem}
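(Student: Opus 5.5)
The plan is to prove the expansion \eqref{gn:eq:curvature-expansion} first, and then get attainment from Theorem~\ref{gn:thm:threshold}. Since $\operatorname{Scal}_g(x)>0$ and $A_{n,q}>0$, the expansion gives $W_M(U_{x,\varepsilon})>C_*$ for small $\varepsilon$. Hence $C_M>C_*$, and Theorem~\ref{gn:thm:threshold} yields a positive maximizer. The last assertion is the contrapositive: if $\operatorname{Scal}_g(x)>0$ at some interior point, then $C_M>C_*$.

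For the test functions I take $U_{x,\varepsilon}(\exp_x(\varepsilon y))=\chi(\varepsilon|y|)\,Q(y)$, with $\chi$ a fixed cutoff supported in a ball of radius smaller than the injectivity radius on $K$. The Euclidean optimizer is a rescaling of $Q$ by Weinstein's theorem, so $W_{\mathbb R^n}(Q)=C_*$. The quotient is invariant under the dilation $\varepsilon$, so I compute each term in normal coordinates. The three ingredients are:
\begin{itemize}
\item[(i)] the volume expansion $\sqrt{\det g}(\varepsilon y)=1-\tfrac{\varepsilon^2}{6}\operatorname{Ric}_{ij}(x)y^iy^j+O(\varepsilon^3|y|^3)$;
\item[(ii)] the inverse metric $g^{ij}(\varepsilon y)=\delta^{ij}+\tfrac{\varepsilon^2}{3}R_{ikjl}(x)y^ky^l+O(\varepsilon^3|y|^3)$;
\item[(iii)] the exponential decay of $Q$ and $\nabla Q$, which makes the cutoff errors $O(e^{-c/\varepsilon})$ and shows that all moments converge.
\end{itemize}
Because $Q$ is radial, the quadratic forms average to traces: $\int \operatorname{Ric}_{ij}y^iy^jf(|y|)=\tfrac{\operatorname{Scal}}{n}\int|y|^2f$. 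Moreover $\nabla Q$ is parallel to $y$, so the curvature term in $g^{ij}$ vanishes, since $R_{ikjl}y^iy^ky^jy^l=0$. Therefore
\begin{equation*}
\|U\|_p^p=\varepsilon^n\Bigl(\int Q^p-\tfrac{\varepsilon^2\operatorname{Scal}}{6n}\int|y|^2Q^p+O(\varepsilon^3)\Bigr)\quad(p=2,q),
\end{equation*}
and
\begin{equation*}
\|\nabla U\|_2^2=\varepsilon^{n-2}\Bigl(\int|\nabla Q|^2-\tfrac{\varepsilon^2\operatorname{Scal}}{6n}\int|y|^2|\nabla Q|^2+O(\varepsilon^3)\Bigr).
\end{equation*}
The $O(\varepsilon^3)$ terms are uniform in $x\in K$ because the metric derivatives are bounded there.

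Expanding the quotient gives
\begin{equation*}
\frac{W_M(U)}{C_*}=1-\frac{\varepsilon^2\operatorname{Scal}}{6n}\Bigl(\frac{\int|y|^2Q^q}{\int Q^q}-\frac\alpha2\frac{\int|y|^2Q^2}{\int Q^2}-\frac\beta2\frac{\int|y|^2|\nabla Q|^2}{\int|\nabla Q|^2}\Bigr)+O(\varepsilon^3).
\end{equation*}
The main obstacle is to turn this bracket into $-2nA_{n,q}$, that is, to eliminate the $Q^2$ and $|\nabla Q|^2$ moments in favour of the $Q^q$ moment. My plan is to use weighted Pohozaev and Nehari identities from \eqref{gn:eq:ground-state}.
\begin{itemize}
\item First, multiply the equation by $|y|^2Q$ and integrate by parts, using $\int|y|^2Q\Delta Q=-\int|y|^2|\nabla Q|^2+n\int Q^2$.
\item Second, multiply by $|y|^2\,y\cdot\nabla Q$. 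This gives a second relation among $\int|y|^2Q^2$, $\int|y|^2|\nabla Q|^2$ and $\int|y|^2Q^q$, with lower-order terms $\int Q^2$ and $\int|\nabla Q|^2$.
\item Third, use the standard relations $\int|\nabla Q|^2=\tfrac{\beta}{q}\int Q^q$ and $\int Q^2=\tfrac{\alpha}{q}\int Q^q$, which follow from the Nehari and Pohozaev identities.
\end{itemize}
Solving these for the two unwanted moments should leave exactly $\tfrac{\alpha}{12}$ plus $\tfrac{q-2}{12n}$ times the ratio of $Q^q$ moments. The $|y|^2|\nabla Q|^2$ term needs the radial structure, namely $|\nabla Q|^2=Q'(r)^2$. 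I would check the coefficients carefully against the harmonic-oscillator case as a sanity test. Positivity of $A_{n,q}$ is then immediate, since $\alpha>0$ and $q>2$.
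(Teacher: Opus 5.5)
Your proposal is correct and is essentially the paper's proof. It uses the same normal-coordinate expansions for a radially cut-off rescaling of $Q$; the paper cuts off at radius $\varepsilon L|\log\varepsilon|$ rather than at a fixed radius, which only changes the tail error from exponentially small to $O(\varepsilon^4)$. It arrives at the same raw coefficient and performs the same reduction via $\int Q^2=\frac{\alpha}{q}\int Q^q$, $\int|\nabla Q|^2=\frac{\beta}{q}\int Q^q$, and the identity $\int|y|^2(|\nabla Q|^2+Q^2)=\int|y|^2Q^q+n\int Q^2$ obtained by testing with $|y|^2Q$.

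Your second weighted identity (testing with $|y|^2\,y\cdot\nabla Q$) is not needed. After the unweighted substitutions the bracket equals $\bigl(\int|y|^2Q^q-\frac q2\int|y|^2(Q^2+|\nabla Q|^2)\bigr)/\int Q^q$, so the two unwanted moments enter only through their sum, which the $|y|^2Q$ identity already determines.
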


Positive scalar curvature is sufficient but not necessary for attainment. Theorem~\ref{gn:thm:small-holes} applies, for example, to flat tori with small geodesic balls removed.

\subsection{Constrained nonlinear Schr\"odinger energy}

At $q=2+4/n$, define
\begin{equation}\label{gn:eq:NLS-energy}
 \mathcal E_M(u)=\frac12\|\nabla u\|_2^2-\frac1q\|u\|_q^q,
 \qquad I_M(m)=\inf_{\substack{u\in H^1_0(M)\\\|u\|_2^2=m}}\mathcal E_M(u),
\end{equation}
and
\begin{equation}\label{gn:eq:two-masses}
 m_0(M)=\left(\frac{q}{2C_M}\right)^{n/2},
 \qquad m_* =\left(\frac{q}{2C_*}\right)^{n/2}.
\end{equation}
The thresholds for negative energy and for unboundedness below need not coincide; see \cite{AST2017} for metric graphs. The following theorem gives these thresholds on compact manifolds with Dirichlet boundary conditions and their asymptotics as the capacity of the holes tends to zero.

\begin{introtheorem}[Constrained energy minimization]\label{gn:thm:NLS}
There is a function of mass $m$ with negative energy if and only if $m>m_0(M)$. For $0<m<m_*$, the fixed-mass energy is coercive and $I_M(m)$ is attained. For $m>m_*$, one has $I_M(m)=-\infty$.

If $C_M>C_*$, every mass in $(m_0(M),m_*)$ admits a negative-energy constrained minimizer. For holes in the setting of Theorem~\ref{gn:thm:small-holes}, as $\lambda_D\to0$,
\begin{equation}\label{gn:eq:small-hole-mass}
 m_0(M_D)=\left(\frac q2\right)^{n/2}V\lambda_D^{n/2}
 \bigl(1+O(\lambda_D)\bigr)\longrightarrow0,
\end{equation}
whereas $m_*$ depends only on the dimension. In particular, $m_0(M_D)/m_*\to0$, and every fixed mass $m\in(0,m_*)$ eventually admits a negative-energy constrained minimizer.
\end{introtheorem}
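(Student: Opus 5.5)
The plan is to reduce all four assertions to the scaling structure of $W_M$ at the $L^2$-critical exponent. When $q=2+4/n$ one has $\beta=2$ and $\alpha=q-2=4/n$. For $\|u\|_2^2=m$ this gives
\[
\|u\|_q^q=W_M(u)\,m^{2/n}\|\nabla u\|_2^2\le C_Mm^{2/n}\|\nabla u\|_2^2,
\qquad\text{hence}\qquad
\mathcal E_M(u)=\|\nabla u\|_2^2\Bigl(\tfrac12-\tfrac1qW_M(u)m^{2/n}\Bigr).
\]
The energy is therefore negative exactly when $W_M(u)>q/(2m^{2/n})$. Such a $u$ exists if and only if $C_M>q/(2m^{2/n})$, i.e. $m>m_0(M)$. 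When $C_M$ is attained the supremum is reached, and otherwise the strict inequality still produces a witness, so this equivalence uses only the definition of the supremum. For $m<m_*$ I would invoke the finiteness part of Theorem~\ref{gn:thm:threshold}, but with $C_*$ in place of $C_M$ this is not yet coercivity. So I first need to check the claim as stated against $m_0(M)$.

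The issue is that $C_M$ may exceed $C_*$, so $m_0(M)\le m_*$, and for $m\in(m_0,m_*)$ the coefficient $\frac12-\frac1qC_Mm^{2/n}$ can be negative. I would argue coercivity through compactness instead. Take a minimizing sequence with $\|\nabla u_k\|_2\to\infty$. Then $\mathcal E_M(u_k)\ge\|\nabla u_k\|_2^2\bigl(\tfrac12-\tfrac1qW_M(u_k)m^{2/n}\bigr)$ is bounded above, so $\limsup W_M(u_k)\ge q/(2m^{2/n})>C_*$ because $m<m_*$. After normalization, the $u_k$ form a sequence with $W\to$ some value in $(C_*,C_M]$ and $\ell_k\to0$.

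A refined, subcritical-level version of Theorem~\ref{gn:thm:threshold} forbids this. Its profile decomposition, or the concentration argument behind it, shows that any sequence with $\ell_k\to0$ has $\limsup W\le C_*$. I would expect the paper to contain this localized statement, namely that concentrating sequences only see the Euclidean constant. Granting it, the energy is bounded below and minimizing sequences are $H^1_0$-bounded. Compactness of $H^1_0(M)\hookrightarrow L^2,L^q$ (Rellich, since $q<2^*$) together with weak lower semicontinuity of the gradient norm then yields a minimizer. The same argument works on all of $(0,m_*)$, and on $(m_0,m_*)$ the minimum is negative, which gives the second paragraph of the theorem. I expect this step, ruling out concentration below $m_*$, to be the main obstacle.

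For $m>m_*$ I would insert the cutoff bubbles $B_{x,\ell,R}$ at an interior point, rescaled to mass $m$. As $\ell\to0$, $W_M(B)\to C_*$ and $\|\nabla B\|_2^2\sim\ell^{-2}$. The coefficient $\frac12-\frac1qC_*m^{2/n}$ is negative, so $\mathcal E\to-\infty$.

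Finally, the small-hole asymptotic follows by substitution. By \eqref{gn:eq:spectral-equivalent} with $\beta=2$ and $1-q/2=-2/n$, we have $C_{D}=V^{-2/n}\lambda_D^{-1}(1+O(\lambda_D))$. Then
\[
m_0=(q/2)^{n/2}C_D^{-n/2}=(q/2)^{n/2}V\lambda_D^{n/2}(1+O(\lambda_D)).
\]
Since $m_*$ depends only on $n$, we get $m_0/m_*\to0$. Moreover $C_D>C_*$ for small $\lambda_D$, so a fixed $m<m_*$ eventually lies in $(m_0,m_*)$, and the previous step gives a negative-energy minimizer.
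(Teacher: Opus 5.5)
Your proof is correct and follows the paper's argument. The bound you grant, that $\limsup_k W_M(u_k)\le C_*$ whenever $\|u_k\|_2=1$ and $\|\nabla u_k\|_2\to\infty$, is exactly Proposition~\ref{gn:prop:almost-sharp}: for $\|u\|_2=1$ and $\beta=2$ it reads $W_M(u)\le C_*+\eta+B_\eta\|\nabla u\|_2^{-2}$, so no profile decomposition is needed, and the paper simply inserts it into the energy to get the explicit coercive bound \eqref{gn:eq:NLS-coercive} instead of arguing by contradiction on minimizing sequences; your remaining steps (the identity \eqref{gn:eq:energy-quotient}, rescaled bubbles for $m>m_*$, and substitution of \eqref{gn:eq:spectral-equivalent}) coincide with the paper's.
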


\subsection{Related work and motivation}

The interpolation inequalities originate in the work of Gagliardo \cite{Gagliardo1958} and Nirenberg \cite{Nirenberg1959}. Weinstein \cite{Weinstein1983} characterized the Euclidean sharp constant by a variational problem, and Kwong \cite{Kwong1989} proved uniqueness of the positive radial solution of \eqref{gn:eq:ground-state}. Non-attainment on bounded Euclidean domains is proved in \cite[\S4.3]{CMMT2014}, which also observed that removing a small ball from a closed manifold can make the sharp constant exceed the Euclidean value and asked whether the supremum is attained on compact Riemannian manifolds with boundary. Banica \cite[Remark~6.1, p.~1668]{BanicaCPDE07} asked whether the sharp constant on hyperbolic space equals the Euclidean constant. Mukherjee \cite[Theorem~1.3]{Mukherjee2017} proved equality and non-attainment on hyperbolic space; Remark~2.1 of that paper records divergence of the supremum for punctured spheres in dimensions $n\ge3$. In this paper, we address the compact Riemannian manifold question. Compactness of the underlying manifold does not prevent maximizing sequences from concentrating at arbitrarily small scales. To exclude such concentration, we prove that the sharp constant strictly exceeds its Euclidean value for complements of holes of sufficiently small capacity. When the complement is connected, we also establish uniqueness and quantitative stability of the nonnegative normalized extremal.

Del Pino and Dolbeault \cite{DelPinoDolbeault02} determined sharp constants and optimizers for a family of Gagliardo--Nirenberg inequalities and related them to nonlinear diffusion. Cordero-Erausquin, Nazaret, and Villani \cite{CNV04} gave mass-transportation proofs of sharp Sobolev and Gagliardo--Nirenberg inequalities. Bianchi and Egnell \cite{BianchiEgnell91} proved quantitative stability for the critical Sobolev inequality. Carlen and Figalli \cite{CarlenFigalli13} prove stability for a planar Gagliardo--Nirenberg--Sobolev inequality involving the $L^4$ and $L^6$ norms and use fast diffusion to obtain stability for the logarithmic Hardy--Littlewood--Sobolev inequality and quantitative convergence for the critical-mass Keller--Segel equation. Bonforte, Dolbeault, Nazaret, and Simonov \cite{BDNSMemoir} use fast diffusion and entropy estimates to prove constructive stability for the $L^{p+1}$--$L^{2p}$ family and improved convergence rates toward self-similar solutions under suitable tail assumptions.

Riemannian inequalities with lower-order terms are treated in \cite{Brouttelande2003,CecconDuran2016,CecconDuranGodoi2026}; the last paper relates the second sharp constant to scalar curvature through weighted integrals of Euclidean optimizers. Cheng \cite{Cheng2025} obtains curvature restrictions for interpolation families with exponent pairs $2a$ and $a+1$. Theorem~\ref{gn:thm:curvature} concerns the homogeneous Dirichlet quotient \eqref{gn:eq:quotient}, which has an $L^2$ factor, throughout the range \eqref{gn:eq:range}. The relation between eigenvalue perturbations and capacity is studied in \cite{Courtois1995,AFHL2019}.

The concentration-compactness method is due to Lions \cite{Lions1984I,Lions1984II}. Coordinate estimates for manifolds with boundary are recorded in \cite{EscobarI}.

\section{Localization and Euclidean comparison}
\label{gn:sec:localization}

\subsection{Euclidean inequalities}

The Euclidean sharp inequality is
\begin{equation}\label{gn:eq:Euclidean-GN}
 \int_{\mathbb R^n}|f|^q\,dy
 \le C_*\left(\int_{\mathbb R^n}|f|^2\,dy\right)^{\alpha/2}
 \left(\int_{\mathbb R^n}|\nabla f|^2\,dy\right)^{\beta/2}.
\end{equation}
The nonzero real equality cases are $aQ(b(\,\cdot-y_0))$, with $a\ne0$ and $b>0$. To recall the classification, existence follows from \cite{Weinstein1983}. If $f$ is an optimizer, then so is $v=|f|$, and the Euler equation is
\[
 -\Delta v+A v=Bv^{q-1},\qquad
 A=\frac{\alpha\|\nabla v\|_2^2}{\beta\|v\|_2^2}>0,
 \quad B=\frac{q\|\nabla v\|_2^2}{\beta\|v\|_q^q}>0.
\]
Elliptic regularity and the maximum principle give $v>0$; local elliptic estimates and $v\in H^1(\mathbb R^n)$ give $v(y)\to0$ as $|y|\to\infty$. The symmetry theorem recalled in \cite[pp.~243--244]{Kwong1989} makes $v$ radial about some point. After rescaling to \eqref{gn:eq:ground-state}, Kwong's uniqueness theorem identifies it with $Q$. Since $v>0$, the function $f/v\in H^1_{\mathrm{loc}}(\mathbb R^n)$ takes only the values $\pm1$ and is constant. Conversely, multiplication, translation and dilation preserve equality. In particular, every real optimizer has constant sign and is nonzero everywhere.

The dilation $f(y)\mapsto\ell^{-n/2}f(y/\ell)$ preserves the $L^2$ norm, multiplies the squared gradient norm by $\ell^{-2}$ and the nonlinear integral by $\ell^{-\beta}$, and leaves the quotient unchanged.

Let $Q_0$ be the centered positive Euclidean optimizer normalized by
\begin{equation}\label{gn:eq:normalized-optimizer}
 \|Q_0\|_2=\|\nabla Q_0\|_2=1.
\end{equation}
It is obtained from $Q$ by multiplication by a constant and a dilation. Fix a smooth radial cutoff $\chi$, equal to one on $B_1$, zero outside $B_2$, and satisfying $0\le\chi\le1$.

\begin{lemma}[Dirichlet half-space and domains]\label{gn:lem:flat-domains}
The Dirichlet sharp constant on $\mathbb R^n_+$ equals $C_*$ and is not attained. The same conclusions hold on every smooth bounded Euclidean domain.
\end{lemma}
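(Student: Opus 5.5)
The plan is to prove the two inequalities $C_\Omega\le C_*$ and $C_\Omega\ge C_*$ separately, for $\Omega$ either $\mathbb R^n_+$ or a smooth bounded domain. Non-attainment then comes from the equality classification recalled above.

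\emph{Upper bound.} Zero extension maps $H^1_0(\Omega)$ isometrically into $H^1(\mathbb R^n)$. It preserves the $L^2$ and $L^q$ norms and the gradient norm, since $\nabla\widetilde u=\widetilde{\nabla u}$ for $u\in H^1_0$. Hence $W_\Omega(u)=W_{\mathbb R^n}(\widetilde u)\le C_*$ by \eqref{gn:eq:Euclidean-GN}, and so $C_\Omega\le C_*$.

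\emph{Lower bound.} Pick an interior point $y_0$ and $r>0$ with $B_{2r}(y_0)\subset\Omega$; for the half-space take $y_0=(0,\dots,0,1)$ and $r=1/2$. I use the test functions
\[
f_R(y)=\chi(y/R)\,Q_0(y),\qquad u_{R}(y)=(r/R)^{-n/2}f_R\bigl(R(y-y_0)/r\bigr),
\]
which are supported in $B_{2r}(y_0)\subset\Omega$. Since $W$ is invariant under translation and dilation, $W_\Omega(u_R)=W_{\mathbb R^n}(f_R)$. Because $Q_0$ and $\nabla Q_0$ decay exponentially, $f_R\to Q_0$ in $H^1\cap L^q$ as $R\to\infty$. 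The error terms $\int(1-\chi(\cdot/R))^2Q_0^2$ and $R^{-2}\int|\nabla\chi|^2Q_0^2$ tend to zero, and the $L^q$ convergence follows from the $H^1$ convergence by the Gagliardo--Nirenberg inequality itself. Therefore $W(f_R)\to W(Q_0)=C_*$ and $C_\Omega\ge C_*$. For the half-space, translation-invariance alone already gives this, since translates of $Q_0$ can be pushed deep into $\mathbb R^n_+$ after cutoff.

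\emph{Non-attainment.} Suppose $u\in H^1_0(\Omega)\setminus\{0\}$ satisfies $W_\Omega(u)=C_\Omega=C_*$. Then $\widetilde u$ is a Euclidean optimizer. By the classification recalled above, $\widetilde u=aQ(b(\cdot-y_1))$ with $a\ne0$, and this function is nonzero everywhere. But $\widetilde u$ vanishes on $\mathbb R^n\setminus\Omega$, a set of positive measure both for the half-space and for a bounded domain. This is a contradiction.

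The only delicate point is the identification $\nabla\widetilde u=\widetilde{\nabla u}$ and the a.e.\ vanishing of $\widetilde u$ off $\Omega$. Both are standard for $H^1_0$: approximate by $C_c^\infty(\Omega)$ functions and pass to the limit. Neither step uses the smoothness of $\partial\Omega$.
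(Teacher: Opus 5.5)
Your proposal is correct and follows essentially the same route as the paper. Zero extension gives $C_\Omega\le C_*$, translated and dilated cutoffs of the Euclidean optimizer placed in an interior ball give $C_\Omega\ge C_*$, and the equality classification (real optimizers vanish nowhere) rules out attainment; you simply supply more detail, such as the $H^1\cap L^q$ convergence of the cutoffs.
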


\begin{proof}
Zero extension gives the upper bound $C_*$. A compactly supported Euclidean near-optimizer can be translated and dilated into any interior ball, giving the reverse inequality. Attainment would make its zero extension a whole-space optimizer vanishing on an open set, contrary to the equality classification.
\end{proof}

\subsection{Local coordinates}

We use normal coordinates in the interior and Fermi coordinates at the boundary. The boundary normal points inward, and $\mathrm{II}(X,Y)=-g(\nabla_X\nu,Y)$, as in \cite[\S1.1]{EscobarI}. Put $K_g=\operatorname{tr}\mathrm{II}$. At a boundary center,
\begin{equation}\label{gn:eq:Fermi-first-order}
 \sqrt{|g(y)|}=1-K_g y_n+O(|y|^2),\qquad
 g^{ab}(y)=\delta^{ab}+2\mathrm{II}^{ab}y_n+O(|y|^2),
\end{equation}
with $g^{an}=0$ and $g^{nn}=1$; see \cite[Lemma~3.7]{EscobarI}. The coordinate bounds are uniform on fixed smaller neighborhoods by \cite[Lemmas~3.2 and~3.6]{EscobarI}. The same geometric construction applies in dimension two.

\begin{lemma}[Local comparison]\label{gn:lem:local-comparison}
Let $f\in H^1$ be supported in $B_{2R}$ in an interior chart, or in $B_{2R}^+$ with zero trace on the flat boundary in a Fermi chart. Write
\[
 P_{x,\varepsilon}f(z)=\varepsilon^{-n/2}f(\Phi_x^{-1}(z)/\varepsilon),
\]
extended by zero. For fixed $R$ and sufficiently small $\varepsilon$,
\begin{align}
 \|P_{x,\varepsilon}f\|_2^2
 &=\bigl(1+O_R(\varepsilon^\gamma)\bigr)\|f\|_2^2,\label{gn:eq:local-mass}\\
 \varepsilon^2\|\nabla P_{x,\varepsilon}f\|_2^2
 &=\bigl(1+O_R(\varepsilon^\gamma)\bigr)\|\nabla f\|_2^2,\label{gn:eq:local-energy}\\
 \varepsilon^\beta\|P_{x,\varepsilon}f\|_q^q
 &=\bigl(1+O_R(\varepsilon^\gamma)\bigr)\|f\|_q^q,\label{gn:eq:local-nonlinearity}
\end{align}
where $\gamma=2$ for an interior normal chart and $\gamma=1$ for a Fermi chart. Consequently,
\begin{equation}\label{gn:eq:local-quotient}
 W_M(P_{x,\varepsilon}f)=W_{\mathrm{flat}}(f)
 \bigl(1+O_R(\varepsilon^\gamma)\bigr).
\end{equation}
The estimates are uniform when the charts and their supports lie in the indicated coordinate neighborhoods.
\end{lemma}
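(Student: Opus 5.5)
The proof is a change of variables $z=\Phi_x(\varepsilon y)$, which moves every integral onto the flat support $B_{2R}$ (or $B_{2R}^+$). The metric then enters only through the rescaled coefficients $\sqrt{|g(\varepsilon y)|}$ and $g^{ij}(\varepsilon y)$, and these are close to their Euclidean values on that support.

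\emph{Coefficient expansions.} In an interior normal chart one has $g_{ij}(w)=\delta_{ij}+O(|w|^2)$. Hence
\[
\sqrt{|g(\varepsilon y)|}=1+O(\varepsilon^2R^2),\qquad g^{ij}(\varepsilon y)=\delta^{ij}+O(\varepsilon^2R^2)
\]
uniformly for $|y|\le 2R$. In a Fermi chart, \eqref{gn:eq:Fermi-first-order} gives the same bounds with $O(\varepsilon R)$ in place of $O(\varepsilon^2R^2)$. Each implicit constant depends only on bounds for the metric on the fixed coordinate neighborhood. These bounds are uniform in the center $x$ by the cited lemmas of \cite{EscobarI}, and the same holds for normal coordinates over compact sets of centers. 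For $\varepsilon$ small, $\Phi_x(\varepsilon B_{2R})$ lies in the chart, so the zero extension of $P_{x,\varepsilon}f$ belongs to $H^1_0(M)$. In the Fermi case this uses the zero trace on $\{y_n=0\}$ together with the support condition, which makes $P_{x,\varepsilon}f$ vanish on $\partial M$.

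\emph{The three integrals.} The volume form is $dV_g=\sqrt{|g(w)|}\,dw$ with $w=\varepsilon y$, so $dw=\varepsilon^n dy$. This gives
\[
\|P_{x,\varepsilon}f\|_2^2=\int |f(y)|^2\sqrt{|g(\varepsilon y)|}\,dy,
\]
which proves \eqref{gn:eq:local-mass}. For the nonlinear term, $|P_{x,\varepsilon}f|^q=\varepsilon^{-nq/2}|f|^q$, and after the change of variables the remaining power is $\varepsilon^{n-nq/2}=\varepsilon^{-\beta}$. This proves \eqref{gn:eq:local-nonlinearity}. For the gradient, $\partial_{w_i}$ of $P_{x,\varepsilon}f$ equals $\varepsilon^{-n/2-1}(\partial_if)(y)$. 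Therefore
\[
\varepsilon^2\|\nabla P_{x,\varepsilon}f\|_2^2=\int g^{ij}(\varepsilon y)\,\partial_if\,\partial_jf\,\sqrt{|g(\varepsilon y)|}\,dy .
\]
Since the matrix $g^{ij}(\varepsilon y)-\delta^{ij}$ has operator norm $O(\varepsilon^\gamma)$, the pointwise bound $|(g^{ij}-\delta^{ij})\xi_i\xi_j|\le O(\varepsilon^\gamma)|\xi|^2$ holds, and this gives \eqref{gn:eq:local-energy}. In each case the error is a multiplicative factor on the integrand, bounded by $1+O_R(\varepsilon^\gamma)$ pointwise, so the relative error is uniform in $f$.

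\emph{The quotient.} Insert the three estimates into $W_M$. The powers of $\varepsilon$ cancel because $\varepsilon^{\beta}$ appears in the numerator and $(\varepsilon^2)^{\beta/2}$ in the denominator. The quotient of factors $(1+O(\varepsilon^\gamma))^{1}(1+O(\varepsilon^\gamma))^{-\alpha/2}(1+O(\varepsilon^\gamma))^{-\beta/2}$ is again $1+O(\varepsilon^\gamma)$. This yields \eqref{gn:eq:local-quotient}.

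The argument is routine. The only point that needs care is keeping the constants uniform in the center $x$ and in the chart, which is exactly what the uniform coordinate bounds supply. Without that uniformity the lemma would hold only pointwise in $x$.
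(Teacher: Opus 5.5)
Your proposal is correct and follows essentially the paper's argument: the change of variables $z=\Phi_x(\varepsilon y)$ together with the bounds $\sqrt{|g(\varepsilon y)|}=1+O_R(\varepsilon^\gamma)$ and $g^{ij}(\varepsilon y)=\delta^{ij}+O_R(\varepsilon^\gamma)$ on the support gives all three estimates, and the powers of $\varepsilon$ cancel in the quotient. The only difference is that you carry out the Dirichlet-energy computation directly, whereas the paper obtains it from identity~(3.11) of \cite{EscobarI} by writing $P_{x,\varepsilon}=\varepsilon^{-1}\mathcal T_{x,\varepsilon}$.
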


\begin{proof}
Since $P_{x,\varepsilon}=\varepsilon^{-1}\mathcal T_{x,\varepsilon}$ in the notation of \cite{EscobarI}, its Dirichlet-energy identity~(3.11) gives \eqref{gn:eq:local-energy}. For the norms, use
\[
 \int_M|P_{x,\varepsilon}f|^r\,dV_g
 =\varepsilon^{n-nr/2}\int|f(y)|^r\sqrt{|g(\varepsilon y)|}\,dy.
\]
In normal charts both metric and density errors are $O_R(\varepsilon^2)$; in Fermi charts they are $O_R(\varepsilon)$. The powers of $\varepsilon$ cancel in the quotient.
\end{proof}

\begin{proposition}[Almost-sharp compact inequality]\label{gn:prop:almost-sharp}
For every $\eta>0$ there exists $B_\eta<\infty$ such that
\begin{equation}\label{gn:eq:almost-sharp}
 \|u\|_q^q\le(C_*+\eta)\|u\|_2^\alpha\|\nabla u\|_2^\beta
 +B_\eta\|u\|_2^q
 \qquad(u\in H^1_0(M)).
\end{equation}
On a fixed smooth closed manifold $N$, the same inequality holds for all $u\in H^1(N)$.
\end{proposition}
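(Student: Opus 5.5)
The plan is a partition of unity into small coordinate patches. On each patch we apply the Euclidean or half-space inequality through Lemma~\ref{gn:lem:local-comparison} with $\varepsilon$ fixed and small, and then sum. The almost-sharp constant comes from a convexity argument that is standard for Aubin-type inequalities. Fix $\eta>0$. Choose $\rho>0$ small, and cover $M$ by finitely many interior normal balls and boundary Fermi half-balls of radius $\rho$. Take a partition of unity $\{\psi_j\}$ subordinate to this cover with $\psi_j=\theta_j^2$, where each $\theta_j$ is smooth and satisfies $\sum_j\theta_j^2=1$. Then $|\nabla\theta_j|\le C_\rho$. Boundary patches carry zero trace on the flat boundary because $u\in H^1_0(M)$. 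On $N$ only interior patches occur.

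\emph{Local step.} On a patch of radius $\rho$, the metric coefficients in the chart deviate from Euclidean by $O(\rho)$. This is \eqref{gn:eq:Fermi-first-order} in Fermi charts and an $O(\rho^2)$ deviation in normal charts. The estimates \eqref{gn:eq:local-mass}--\eqref{gn:eq:local-nonlinearity} are uniform for supports in fixed neighborhoods. Concretely, one compares the coordinate expressions of the three integrals directly with multiplicative errors $1+O(\rho)$. Combined with \eqref{gn:eq:Euclidean-GN} or Lemma~\ref{gn:lem:flat-domains} for the half-space, this gives, for $v$ supported in one patch,
\[
\|v\|_q^q\le (C_*+\eta/2)\|v\|_2^\alpha\|\nabla v\|_2^\beta
\]
once $\rho=\rho(\eta)$ is small.

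\emph{Summation.} Write $|u|^q=\sum_j\theta_j^2|u|^q$. The naive route applies the local inequality to $\theta_j u$ and then uses H\"older in the sum over $j$, but this loses the constant because $\alpha/2+\beta/2=q/2>1$. I would instead follow the Aubin/Hebey device. Bound
\[
\|u\|_q^q=\int\Bigl(\sum_j\theta_j^2u^2\Bigr)^{q/2}\cdot\text{(correction)}.
\]
More cleanly, use $\|u\|_q^2=\|u^2\|_{q/2}\le\sum_j\|\theta_j^2u^2\|_{q/2}=\sum_j\|\theta_ju\|_q^2$ by Minkowski, valid since $q/2>1$. Then apply the local inequality in the form
\[
\|\theta_ju\|_q^2\le (C_*+\eta/2)^{2/q}\|\theta_ju\|_2^{2\alpha/q}\|\nabla(\theta_ju)\|_2^{2\beta/q}.
\]
Bound $\|\theta_ju\|_2\le\|u\|_2$. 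For the other factor, Young's inequality with exponents $q/\beta$ and its conjugate gives $\|\nabla(\theta_ju)\|_2^{2\beta/q}\|u\|_2^{2\alpha/q}$. Summing $\|\nabla(\theta_ju)\|_2^2\le(1+\eta')\|\nabla u\|_2^2+C_{\rho,\eta'}\|u\|_2^2$ uses $\sum\theta_j^2=1$ and the bound on $\sum|\nabla\theta_j|^2$.

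\emph{Main obstacle.} The difficulty is that the exponent $2\beta/q$ may be less than $1$, so $\sum_j a_j^{2\beta/q}$ is not controlled by $(\sum_j a_j)^{2\beta/q}$ without a factor $J^{1-2\beta/q}$, where $J$ is the number of patches. To handle this, I would first reduce to the regime where $\|\nabla u\|_2$ is large relative to $\|u\|_2$, say $\|\nabla u\|_2\ge L\|u\|_2$. In the complementary regime, the standard (non-sharp) Gagliardo--Nirenberg inequality on $M$ gives $\|u\|_q^q\le C\|u\|_2^\alpha\|\nabla u\|_2^\beta\le CL^\beta\|u\|_2^q$, which is absorbed into $B_\eta$. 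In the large-gradient regime I would argue by contradiction instead of summing. Suppose $u_k$ satisfies $\|u_k\|_2=1$ and $\|u_k\|_q^q\ge(C_*+\eta)\|\nabla u_k\|_2^\beta+k$. Then $\|\nabla u_k\|_2\to\infty$, using the non-sharp inequality once more. A concentration-compactness argument in Lions' style, applied to $|u_k|^q$ normalized by $\|\nabla u_k\|_2^\beta$, shows that the nonlinear mass lives at scale $\ell_k\to0$ and that the patch inequality applies piece by piece. The local form above then yields a contradiction. This avoids the $J$-dependence because only patches with nonnegligible nonlinear mass contribute, and vanishing kills the rest.
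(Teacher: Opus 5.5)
Your local step and the Minkowski step $\|u\|_q^2\le\sum_j\|\theta_ju\|_q^2$ are the same as in the paper. The obstacle you then identify is self-inflicted, however, and the argument you substitute for the summation is not a proof.

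The factor $J^{1-\beta/q}$ appears only because you discard the local masses through $\|\theta_ju\|_2\le\|u\|_2$. You name the right exponents, $q/\beta$ and its conjugate $q/\alpha$ (conjugate because $\alpha+\beta=q$). They should be used in H\"older's inequality for the finite sum over $j$, keeping both $M_j=\|\theta_ju\|_2^2$ and $E_j=\|\nabla(\theta_ju)\|_2^2$:
\[
 \sum_jM_j^{\alpha/q}E_j^{\beta/q}
 \le\Bigl(\sum_jM_j\Bigr)^{\alpha/q}\Bigl(\sum_jE_j\Bigr)^{\beta/q}.
\]
This bound does not depend on the number of patches. Moreover $\sum_jM_j=\|u\|_2^2$. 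Since $\sum_j\theta_j\nabla\theta_j=\frac12\nabla\sum_j\theta_j^2=0$, the cross terms cancel, and $\sum_jE_j=\|\nabla u\|_2^2+\int_M\bigl(\sum_j|\nabla\theta_j|^2\bigr)u^2$ holds exactly. Raising to the power $q/2$ gives $\|u\|_q^q\le(C_*+\delta)\|u\|_2^\alpha\bigl(\|\nabla u\|_2^2+K_\delta\|u\|_2^2\bigr)^{\beta/2}$. The elementary bound $(X+Y)^b\le(1+\rho)X^b+C_{b,\rho}Y^b$ with $b=\beta/2$ then finishes the proof. This is the paper's argument, and it needs no splitting into gradient regimes.

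Your contradiction argument in the large-gradient regime leaves the essential point unproved. You would need $\limsup_k\ell_k^\beta\|u_k\|_q^q\le C_*$, and that requires a full profile decomposition. You must handle dichotomy, where the nonlinear mass splits among several concentration points. You must also show that the remainder left after extracting finitely many profiles has negligible rescaled $L^q$ norm. The phrase ``vanishing kills the rest'' does not supply this: Lions' vanishing alternative concerns a whole sequence, not a remainder you have not yet constructed. In Section~\ref{gn:sec:compactness}, the remainder $r_k$ of Lemma~\ref{gn:lem:one-profile-splitting} is controlled precisely by applying Proposition~\ref{gn:prop:almost-sharp}. You therefore cannot borrow that argument without circularity. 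Finally, summing profile bounds $\|w^j\|_q^q\le C_*m_j^{\alpha/2}e_j^{\beta/2}$ under $\sum_jm_j\le1$ and $\sum_je_j\le1$ needs, after the embedding $\ell^1\subset\ell^{q/2}$, the same H\"older inequality for finite sums. The algebraic step you tried to avoid comes back in the compactness route.
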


\begin{proof}
Given $\delta>0$, choose a finite normal/Fermi cover sufficiently fine that the local sharp inequality on each chart has constant at most $C_*+\delta$, by Lemmas~\ref{gn:lem:flat-domains} and~\ref{gn:lem:local-comparison}. Take real smooth functions $\theta_i$ subordinate to the cover with $\sum_i\theta_i^2=1$. Put $M_i=\|\theta_i u\|_2^2$ and $E_i=\|\nabla(\theta_i u)\|_2^2$. Minkowski's inequality in $L^{q/2}$ and then H\"older's inequality for the finite sum give
\begin{align*}
 \|u\|_q^2
 &=\left\|\sum_i|\theta_i u|^2\right\|_{q/2}
 \le\sum_i\|\theta_i u\|_q^2\\
 &\le(C_*+\delta)^{2/q}\sum_iM_i^{\alpha/q}E_i^{\beta/q}
 \le(C_*+\delta)^{2/q}\left(\sum_iM_i\right)^{\alpha/q}
 \left(\sum_iE_i\right)^{\beta/q}.
\end{align*}
Here $\alpha/q+\beta/q=1$. The identity $\sum_i\theta_i^2=1$ gives
\[
 \sum_iM_i=\|u\|_2^2,\qquad
 \sum_iE_i=\|\nabla u\|_2^2+\int_M\left(\sum_i|\nabla\theta_i|^2\right)u^2
 \le\|\nabla u\|_2^2+K_\delta\|u\|_2^2.
\]
Raising to the power $q/2$ yields
\begin{equation}\label{gn:eq:shifted-gradient-GN}
 \|u\|_q^q\le(C_*+\delta)\|u\|_2^\alpha
 \bigl(\|\nabla u\|_2^2+K_\delta\|u\|_2^2\bigr)^{\beta/2}.
\end{equation}
For $b>0$ and $\rho>0$, $(X+Y)^b\le(1+\rho)X^b+C_{b,\rho}Y^b$ for $X,Y\ge0$. Choose $\delta$ and $\rho$ with $(C_*+\delta)(1+\rho)\le C_*+\eta$ and apply this inequality with $b=\beta/2$. This proves \eqref{gn:eq:almost-sharp}. On a closed manifold the proof uses only interior charts.
\end{proof}

\begin{corollary}[Spectral bounds]\label{gn:cor:spectral-bounds}
Let $V_M=\operatorname{Vol}_g(M)$ and $\lambda_1(M)$ be the first Dirichlet eigenvalue. Then
\begin{equation}\label{gn:eq:spectral-bounds}
 V_M^{1-q/2}\lambda_1(M)^{-\beta/2}
 \le C_M\le C_*+\eta+B_\eta\lambda_1(M)^{-\beta/2}.
\end{equation}
In particular $C_M<\infty$.
\end{corollary}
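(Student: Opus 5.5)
The two bounds in \eqref{gn:eq:spectral-bounds} come from different sources: the lower one from testing $W_M$ on the first eigenfunction, the upper one from Proposition~\ref{gn:prop:almost-sharp} combined with the Poincar\'e inequality.

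\emph{Lower bound.} We test $W_M$ on the first Dirichlet eigenfunction $\varphi$ (of any connected component attaining $\lambda_1(M)$; here $M$ is connected, so this is the global one), normalized by $\|\varphi\|_2=1$. Then $\|\nabla\varphi\|_2^\beta=\lambda_1(M)^{\beta/2}$. Since $q>2$, H\"older's inequality on a set of finite volume gives
\[
 1=\|\varphi\|_2^2\le\|\varphi\|_q^2\,V_M^{1-2/q},
 \qquad\text{so}\qquad
 \|\varphi\|_q^q\ge V_M^{1-q/2}.
\]
Hence
\[
 C_M\ge W_M(\varphi)\ge V_M^{1-q/2}\lambda_1(M)^{-\beta/2}.
\]

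\emph{Upper bound.} Fix $\eta>0$ and take $u\in H^1_0(M)\setminus\{0\}$; by homogeneity we may assume $\|u\|_2=1$. Dividing \eqref{gn:eq:almost-sharp} by $\|\nabla u\|_2^\beta$ gives
\[
 W_M(u)\le C_*+\eta+B_\eta\|\nabla u\|_2^{-\beta}.
\]
The Poincar\'e (Rayleigh) inequality $\|\nabla u\|_2^2\ge\lambda_1(M)\|u\|_2^2=\lambda_1(M)$ then bounds the last term by $B_\eta\lambda_1(M)^{-\beta/2}$. Here it matters that $\beta>0$, so that a lower bound on $\|\nabla u\|_2$ yields an upper bound on $\|\nabla u\|_2^{-\beta}$.

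Taking the supremum over $u$ gives the upper bound in \eqref{gn:eq:spectral-bounds}. Finiteness of $C_M$ follows because $\lambda_1(M)>0$ for Dirichlet conditions on a compact manifold with nonempty boundary. There is no real obstacle beyond remembering the normalizations, since all the work is already contained in Proposition~\ref{gn:prop:almost-sharp}.
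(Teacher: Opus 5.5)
Your proposal is correct and follows the paper's proof: test $W_M$ on the $L^2$-normalized first eigenfunction with H\"older's inequality for the lower bound, then divide \eqref{gn:eq:almost-sharp} by $\|u\|_2^\alpha\|\nabla u\|_2^\beta$ and apply the Dirichlet Poincar\'e inequality for the upper bound. The normalizations and exponents are handled correctly, including the use of $\beta>0$ when applying the Poincar\'e inequality to $\|\nabla u\|_2^{-\beta}$.
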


\begin{proof}
For the $L^2$-normalized first eigenfunction $\varphi_1$, one has $\|\nabla\varphi_1\|_2^2=\lambda_1(M)$. H\"older's inequality gives $\|\varphi_1\|_q^q\ge V_M^{1-q/2}$, proving the lower bound. Divide \eqref{gn:eq:almost-sharp} by $\|u\|_2^\alpha\|\nabla u\|_2^\beta$ and use the Dirichlet Poincar\'e inequality for the upper bound.
\end{proof}

\begin{remark}[Dirichlet Cheeger constant]\label{gn:rem:cheeger}
Let
\[
 h_D(M)=\inf_{\substack{\varnothing\ne E\Subset M^\circ\\ E\ \mathrm{smooth}}}
 \frac{\operatorname{Per}_g(E)}{\operatorname{Vol}_g(E)}.
\]
Cheeger's inequality \cite{Cheeger1970} gives $\lambda_1(M)\ge h_D(M)^2/4$. Indeed, the coarea formula applied to $u^2$, for $u\in C_c^\infty(M^\circ)$, gives
\[
 h_D(M)\|u\|_2^2\le\int_M|\nabla(u^2)|\le2\|u\|_2\|\nabla u\|_2.
\]
Consequently, \eqref{gn:eq:spectral-bounds} implies
\begin{equation}\label{gn:eq:cheeger-upper}
 C_M\le C_*+\eta+2^\beta B_\eta h_D(M)^{-\beta}.
\end{equation}
If a family has fixed positive volume, uniform $B_\eta$ for one fixed $\eta>0$, and $\lambda_1(M_j)\asymp h_D(M_j)^\theta\to0$ for some $\theta>0$, then \eqref{gn:eq:spectral-bounds} gives $C_{M_j}\asymp h_D(M_j)^{-\theta\beta/2}$.
\end{remark}

\section{Compactness of maximizing sequences}
\label{gn:sec:compactness}

\subsection{Compactness above the Euclidean sharp constant}

\begin{proof}[Proof of the strict part of Theorem~\ref{gn:thm:threshold}]
Insert compactly supported Euclidean near-optimizers to obtain $C_M\ge C_*$. Let $\|u_k\|_2=1$ and $W_M(u_k)\to C_M$. If the gradients are unbounded, a subsequence has $\|\nabla u_k\|_2\to\infty$, and \eqref{gn:eq:almost-sharp} gives $\limsup_kW_M(u_k)\le C_*+\eta$ for every $\eta>0$. Thus $C_M>C_*$ forces boundedness in $H^1_0(M)$.

After extraction, $u_k\rightharpoonup u$ in $H^1_0(M)$ and $u_k\to u$ in $L^2(M)\cap L^q(M)$, with $\|u\|_2=1$. Lower semicontinuity gives $W_M(u)\ge C_M$, hence equality, and
\[
 \|\nabla u_k\|_2^\beta
 =\frac{\|u_k\|_q^q}{W_M(u_k)}
 \longrightarrow\frac{\|u\|_q^q}{C_M}=\|\nabla u\|_2^\beta.
\]
Convergence is therefore strong in $H^1_0(M)$. The argument applies to every subsequence. Taking the absolute value gives a nonnegative extremal.
\end{proof}

\begin{proposition}[Equation for an extremal]\label{gn:prop:extremal-equation}
Every nonnegative extremal $u$ satisfies
\begin{equation}\label{gn:eq:extremal-equation}
 -\Delta_g u+\frac{\alpha\|\nabla u\|_2^2}{\beta\|u\|_2^2}u
 =\frac{q\|\nabla u\|_2^2}{\beta\|u\|_q^q}u^{q-1}
 \quad\hbox{in }M,\qquad u|_{\partial M}=0.
\end{equation}
It is positive in $M^\circ$ and belongs to $C^2(\overline M)$.
\end{proposition}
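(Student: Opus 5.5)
The plan has three steps: derive the Euler--Lagrange equation from first variations of $\log W_M$, then obtain regularity by bootstrapping, then obtain positivity from the strong maximum principle.

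\textbf{Euler--Lagrange equation.} Let $u\ge0$ be an extremal and fix $\phi\in H^1_0(M)$. The map $t\mapsto W_M(u+t\phi)$ has a maximum at $t=0$. Each of the three norms is differentiable there: $\|\cdot\|_q^q$ is $C^1$ on $H^1_0(M)\subset L^q$ because $q<2^*$, and the other two are quadratic. Differentiating $\log W_M(u+t\phi)$ at $t=0$ gives
\[
 \frac{q\int u^{q-1}\phi}{\|u\|_q^q}-\frac{\alpha\int u\phi}{\|u\|_2^2}-\frac{\beta\int\nabla u\cdot\nabla\phi}{\|\nabla u\|_2^2}=0 .
\]
We may divide by $\|\nabla u\|_2$, since it is positive by the Dirichlet Poincaré inequality. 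Multiplying by $\|\nabla u\|_2^2/\beta$ gives exactly the weak form of \eqref{gn:eq:extremal-equation}. The boundary condition $u|_{\partial M}=0$ holds in the trace sense because $u\in H^1_0(M)$.

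\textbf{Regularity.} The equation has the form $-\Delta_g u+Au=Bu^{q-1}$ with $u\in H^1_0$ and subcritical exponent $q-1<2^*-1$. I would bootstrap with $L^p$ elliptic estimates for the Dirichlet problem on the smooth compact manifold $\overline M$. If $u\in L^{p}$, then $u^{q-1}\in L^{p/(q-1)}$, so $u\in W^{2,p/(q-1)}$. Sobolev embedding then places $u$ in a better Lebesgue space.

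Subcriticality makes the integrability gain strictly positive at each step, so finitely many steps reach $W^{2,p}$ for some $p>n$, hence $u\in C^{1,\gamma}(\overline M)$. In dimension two, start from $u\in L^p$ for every $p<\infty$.

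Since $u\ge0$ and $q>2$, the map $s\mapsto s^{q-1}$ is $C^1$ on $[0,\infty)$, so $u^{q-1}\in C^{0,\gamma'}(\overline M)$. Schauder estimates up to the smooth boundary then give $u\in C^{2,\gamma'}(\overline M)\subset C^2(\overline M)$.

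\textbf{Positivity.} Write $-\Delta_g u+Au=Bu^{q-1}\ge0$ with $A>0$, so $u$ is a nonnegative supersolution of $-\Delta_g+A$. The strong maximum principle gives, on each connected component of $M^\circ$, either $u\equiv0$ or $u>0$.

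\textbf{Main obstacle.} The genuine obstacle is ruling out $u\equiv0$ on a component. Here $M$ is connected with smooth boundary, so $M^\circ$ is connected and $u\not\equiv0$. Note that in the small-hole setting, disconnected $M_D$ is excluded by hypothesis anyway. The only other delicate point is the low-regularity start of the bootstrap when $q-1$ is close to $2^*-1$. There I would use the standard Brezis--Kato/Moser iteration to get $u\in L^p$ for all $p$ before applying $W^{2,p}$ estimates.
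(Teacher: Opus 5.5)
Your proposal is correct and follows essentially the same route as the paper: first variation of $\log W_M$ multiplied by $\|\nabla u\|_2^2/\beta$, then subcritical elliptic regularity up to the smooth boundary, then the strong maximum principle using the positive coefficients and connectedness of $M^\circ$. The only surplus is the Brezis--Kato/Moser fallback, which is unnecessary. Your own gain per step, $2/n-(q-2)/p$, is already positive at $p=2^*$ whenever $q<2^*$ and increases with $p$, so the direct bootstrap terminates in finitely many steps even when $q$ is close to $2^*$.
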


\begin{proof}
Differentiate $\log W_M(u)$ against $\phi\in H^1_0(M)$ and multiply by $\|\nabla u\|_2^2/\beta$. This gives \eqref{gn:eq:extremal-equation} weakly. Subcritical elliptic regularity gives boundedness and then $C^2$ regularity up to the smooth boundary. Both coefficients in \eqref{gn:eq:extremal-equation} are positive, and the strong maximum principle gives interior positivity.
\end{proof}

\subsection{Rescaled maximizing sequences}

In an interior normal chart $\Psi_x$, put
\begin{equation}\label{gn:eq:cutoff-bubble}
 B_{x,\ell,R}(z)=\ell^{-n/2}\chi\!\left(\frac{\Psi_x^{-1}(z)}{\ell R}\right)
 Q_0\!\left(\frac{\Psi_x^{-1}(z)}{\ell}\right),
\end{equation}
extended by zero. This notation is used only when the support lies in $M^\circ$.

We next consider a sequence with
\begin{equation}\label{gn:eq:blowup-assumptions}
 \|u_k\|_2=1,\qquad \ell_k=\|\nabla u_k\|_2^{-1}\to0,
 \qquad W_M(u_k)\to C_*.
\end{equation}
Set $g_k=\ell_k^{-2}g$ and $v_k=\ell_k^{n/2}u_k$. Then
\begin{equation}\label{gn:eq:rescaled-normalization}
 \int_Mv_k^2\,dV_{g_k}=1,\qquad
 \int_M|\nabla v_k|_{g_k}^2\,dV_{g_k}=1,\qquad
 \int_M|v_k|^q\,dV_{g_k}\to C_*.
\end{equation}

\begin{lemma}[A nonzero rescaled profile]\label{gn:lem:nonzero-profile}
There are points $z_k\in\overline M$ and a subsequence such that one of the following holds.

If $d_g(z_k,\partial M)/\ell_k\to\infty$, the rescalings in normal coordinates at $z_k$ converge weakly in $H^1_{\mathrm{loc}}(\mathbb R^n)$ and strongly in $L^2_{\mathrm{loc}}\cap L^q_{\mathrm{loc}}$ to a nonzero $w\in H^1(\mathbb R^n)$.

Otherwise, after choosing nearest boundary points $a_k$ and Fermi coordinates there, the rescalings converge in the same local senses on $\mathbb R^n_+$ to a nonzero $w\in H^1_0(\mathbb R^n_+)$. In either case,
\begin{equation}\label{gn:eq:profile-fractions}
 0<m:=\|w\|_2^2\le1,\qquad 0<e:=\|\nabla w\|_2^2\le1.
\end{equation}
\end{lemma}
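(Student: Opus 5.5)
The plan is a Lions-type nonvanishing argument at scale $\ell_k$, followed by a case split according to the distance of the concentration points from the boundary.

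\textbf{Step 1: nonvanishing.} Cover $\overline M$ by balls of $g$-radius $\ell_k$ with bounded overlap; the overlap bound is uniform since $\ell_k\to0$ and $M$ is compact. Suppose
\[
\sup_{z\in\overline M}\int_{B_{\ell_k}(z)}|v_k|^2\,dV_{g_k}\to0 .
\]
In the metric $g_k$ these are unit balls of uniformly bounded geometry, and near $\partial M$ one uses zero extension across the boundary. The local Sobolev inequality on a unit ball gives $\|v\|_{L^q(B)}\le C\|v\|_{H^1(B)}$. Combining it with interpolation between $L^2$ and $L^{\bar q}$ on each ball, for a fixed $\bar q\in(q,2^*)$ (any finite $\bar q>q$ if $n=2$), and summing over the cover as in Lions' lemma, one gets
\[
\int|v_k|^q\,dV_{g_k}\le C\Bigl(\sup_z\int_{B_1^{g_k}(z)}v_k^2\Bigr)^{\theta}\|v_k\|_{H^1(g_k)}^2 ,\qquad \theta>0 .
\]
The right side tends to zero, which contradicts \eqref{gn:eq:rescaled-normalization} since $C_*>0$. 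Hence there are $z_k$ and $c>0$ with $\int_{B_{\ell_k}(z_k)}u_k^2\,dV_g\ge c$.

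\textbf{Step 2: choosing coordinates.} Pass to a subsequence along which $d_g(z_k,\partial M)/\ell_k$ either tends to $\infty$ or stays bounded.

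In the first case, use normal coordinates at $z_k$ and define $w_k(y)=\ell_k^{n/2}u_k(\Psi_{z_k}(\ell_ky))$ on $B_{r/\ell_k}$, with $r$ a uniform injectivity radius; the balls exhaust $\mathbb R^n$.

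In the bounded case, pick a nearest boundary point $a_k$, use Fermi coordinates there, and define $w_k$ on $B^+_{r/\ell_k}$. The ball $B_{\ell_k}(z_k)$ then corresponds to a subset of a fixed half-ball $B^+_{R_0}$, with $R_0$ independent of $k$.

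\textbf{Step 3: extraction of the limit.} By the uniform metric bounds of Lemma~\ref{gn:lem:local-comparison}, applied with a cutoff $\chi(\cdot/R)$, the quantities $\|w_k\|_{L^2(B_R)}$ and $\|\nabla w_k\|_{L^2(B_R)}$ are bounded by $(1+o(1))$ times $1$. Moreover $w_k$ has zero trace on the flat boundary in the Fermi case. A diagonal extraction then gives weak $H^1_{\rm loc}$ convergence. Rellich's theorem on each $B_R$ (or $B_R^+$), using $q<2^*$, gives strong $L^2_{\rm loc}\cap L^q_{\rm loc}$ convergence to some $w$.

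Weak lower semicontinuity on each $B_R$, followed by $R\to\infty$, gives $\|w\|_2^2\le1$ and $\|\nabla w\|_2^2\le1$. Hence $w\in H^1(\mathbb R^n)$, respectively $w\in H^1_0(\mathbb R^n_+)$, the trace being preserved under weak limits.

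Nonvanishing follows because strong local $L^2$ convergence on a fixed ball carries over the mass lower bound $c$ from Step 1. In the boundary case this uses the fixed half-ball $B^+_{R_0}$ from Step 2, up to a factor $1+o(1)$. Thus $m>0$. Finally $e>0$, since a nonzero $H^1$ function on $\mathbb R^n$ or $\mathbb R^n_+$ cannot have vanishing gradient.

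\textbf{Main obstacle.} The main difficulty is Step 1: making the Lions vanishing lemma uniform on the rescaled manifolds with boundary $(M,g_k)$. I would handle it by working chartwise with the uniform coordinate bounds from \cite{EscobarI}, and near the boundary by using zero extension into a half-ball. This reduces the estimate to the Euclidean one with constants $1+O(\ell_k)$.
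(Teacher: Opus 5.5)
Your proposal is correct and follows essentially the same route as the paper: a Lions-type nonvanishing estimate on a cover by $g_k$-unit coordinate balls with zero extension near $\partial M$, then a split according to $d_g(z_k,\partial M)/\ell_k$ into normal versus Fermi charts, then Rellich compactness and lower semicontinuity. The differences are minor.

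\begin{itemize}
\item The paper proves the vanishing estimate at $r_0=2+4/n$, where the gradient power is exactly two, and then interpolates to $q$ using the uniform $L^s(g_k)$ bounds from Proposition~\ref{gn:prop:almost-sharp}. You interpolate directly at $q$ on each ball. This works once the local powers are rebalanced using $\|v_k\|_{L^2(B)}\le\|v_k\|_{H^1(B)}$ and the global bound $\|v_k\|_{H^1(g_k)}^2=2$.
\item In the interior case, the normal chart should have radius $\min\{r,d_g(z_k,\partial M)\}$. The rescaled charts still exhaust $\mathbb R^n$.
\end{itemize}
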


\begin{proof}
In the metrics $g_k$, the normal/Fermi atlas at scale $\ell_k$ gives coordinate neighborhoods of bounded radius and overlap, with uniformly bounded cutoff gradients. Apply the nonvanishing argument of \cite{Lions1984I,Lions1984II} on this cover.

Set $r_0=2+4/n$. The Euclidean GN inequality applied to a cutoff of $v_k$ in each neighborhood, followed by summation, gives
\begin{equation}\label{gn:eq:vanishing-estimate}
 \|v_k\|_{L^{r_0}(g_k)}^{r_0}
 \le C\left(\sup_{z\in\overline M}\int_{B_{g_k}(z,\rho)}v_k^2\,dV_{g_k}\right)^{2/n}
 \int_M\bigl(|\nabla v_k|_{g_k}^2+v_k^2\bigr)\,dV_{g_k},
\end{equation}
for fixed $\rho>0$. The exponent $r_0$ makes the gradient power two; cutoff derivatives give the $L^2$ term. Boundary cutoffs are extended by zero.

For every fixed $2<s<2^*$, put $\beta_s=n(s-2)/2$. Proposition~\ref{gn:prop:almost-sharp}, with exponent $s$, gives
\[
 \|v_k\|_{L^s(g_k)}^s
 =\ell_k^{\beta_s}\|u_k\|_s^s
 \le C_s+B_s\ell_k^{\beta_s}.
\]
If the supremum in \eqref{gn:eq:vanishing-estimate} tended to zero, the vanishing estimate and interpolation with $L^2$, or with an exponent $s>\max\{q,r_0\}$, would give $\|v_k\|_{L^q(g_k)}\to0$, contradicting \eqref{gn:eq:rescaled-normalization}. Hence there exist $z_k$ and $c>0$ such that
\[
 \int_{B_g(z_k,\rho\ell_k)}u_k^2\,dV_g\ge c.
\]
Pass to a subsequence for which $d_g(z_k,\partial M)/\ell_k$ either tends to infinity or remains bounded. In the first case interior normal charts have rescaled radii tending to infinity. In the second, Fermi charts at $a_k$ contain the indicated balls inside a fixed rescaled half-ball. The pulled-back metrics converge to the Euclidean metric on every fixed ball or half-ball.

The bounds \eqref{gn:eq:rescaled-normalization} give weak $H^1$ convergence and, by the subcritical Rellich theorem, strong local $L^2$ and $L^q$ convergence. The displayed lower bound makes the limit nonzero. Lower semicontinuity on expanding balls gives \eqref{gn:eq:profile-fractions}. In the boundary case local trace continuity gives zero trace, and the global $H^1$ bound puts $w$ in $H^1_0(\mathbb R^n_+)$. A nonzero $L^2$ function on either model cannot have zero gradient.
\end{proof}

\begin{lemma}[Norm decomposition]\label{gn:lem:one-profile-splitting}
Let $w,m,e$ be as in Lemma~\ref{gn:lem:nonzero-profile}. There are cutoffs of its pushforwards $w_k\in H^1_0(M)$, supported in coordinate neighborhoods whose radii in the original metric tend to zero, such that, for $r_k=u_k-w_k$,
\begin{align}
 \|r_k\|_2^2&\to1-m,\label{gn:eq:mass-splitting}\\
 \ell_k^2\|\nabla r_k\|_2^2&\to1-e,\label{gn:eq:energy-splitting}\\
 \ell_k^\beta\bigl(\|u_k\|_q^q-\|r_k\|_q^q\bigr)&\to\|w\|_q^q.\label{gn:eq:nonlinear-splitting}
\end{align}
\end{lemma}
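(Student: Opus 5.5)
Work throughout in the rescaled variables $v_k=\ell_k^{n/2}u_k$ with the metrics $g_k=\ell_k^{-2}g$. In these variables \eqref{gn:eq:mass-splitting}--\eqref{gn:eq:nonlinear-splitting} read $\|v_k-\omega_k\|_{L^2(g_k)}^2\to1-m$, $\|\nabla(v_k-\omega_k)\|_{L^2(g_k)}^2\to1-e$, and $\|v_k\|_{L^q(g_k)}^q-\|v_k-\omega_k\|_{L^q(g_k)}^q\to\|w\|_q^q$, where $\omega_k=\ell_k^{n/2}w_k$. Let $\Phi_k$ be the (normal or Fermi) chart of Lemma~\ref{gn:lem:nonzero-profile}, and write $V_k=v_k\circ\Phi_k(\ell_k\,\cdot)$ in rescaled coordinates, so that $V_k\rightharpoonup w$ in $H^1_{\mathrm{loc}}$ and $V_k\to w$ in $L^2_{\mathrm{loc}}\cap L^q_{\mathrm{loc}}$.

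\textbf{Choice of radii and of $w_k$.} Pick radii $R_k\to\infty$ slowly, by a diagonal argument, so that three conditions hold.
\begin{enumerate}[label=(\roman*)]
\item $\ell_kR_k\to0$, so the supports shrink in the original metric and the coordinate metrics are $O(\ell_kR_k)$-close to Euclidean on $B_{2R_k}$ (or $B^+_{2R_k}$).
\item $\|V_k-w\|_{H^1(B_{2R_k})}$ stays bounded while $\|V_k-w\|_{L^2(B_{2R_k})}+\|V_k-w\|_{L^q(B_{2R_k})}\to0$. This is possible because the strong local convergence holds on each fixed ball.
\item $\|w\|_{H^1(\{|y|>R_k\})}\to0$.
\end{enumerate}
Set $\chi_k=\chi(\cdot/R_k)$ and define $w_k$ as the pushforward $\ell_k^{-n/2}(\chi_kw)\circ(\Phi_k^{-1}/\ell_k)$, extended by zero. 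In the boundary case $w\in H^1_0(\mathbb R^n_+)$, so $w_k\in H^1_0(M)$. Lemma~\ref{gn:lem:local-comparison} with $\varepsilon=\ell_k$ does not apply directly, since the support radius grows. Still, its proof shows that metric and density errors are $O(\ell_kR_k)\to0$ uniformly on the support. All integrals can therefore be computed in flat coordinates up to factors $1+o(1)$.

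\textbf{Quadratic terms.} For the mass, expand $\|v_k-\omega_k\|^2=\|v_k\|^2-2\langle v_k,\omega_k\rangle+\|\omega_k\|^2$. Here $\|\omega_k\|^2\to m$ by (iii), and $\langle v_k,\omega_k\rangle=\int V_k\chi_kw+o(1)\to m$ by (ii), which gives \eqref{gn:eq:mass-splitting}. For the energy, $\|\nabla\omega_k\|^2\to e$, since $\nabla\chi_k=O(1/R_k)$ and $w\in L^2$. The cross term $\int\nabla V_k\cdot\nabla(\chi_kw)$ is the delicate part, since only weak $H^1_{\mathrm{loc}}$ convergence is available. Write it as
\[
 \int\nabla V_k\cdot\nabla(\chi_kw)=\int\nabla V_k\cdot\nabla(\chi_{R}w)+\int\nabla V_k\cdot\nabla\bigl((\chi_k-\chi_R)w\bigr)
\]
for a fixed large $R$. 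The first term tends to $\|\nabla(\chi_Rw)\|^2+o_R(1)$ by weak convergence. The second is bounded by $\|\nabla V_k\|_2\,\|(\chi_k-\chi_R)w\|_{H^1}=o_R(1)$, using the uniform bound $\|\nabla V_k\|_2\le1+o(1)$. Letting $k\to\infty$ and then $R\to\infty$ gives the limit $e$, hence \eqref{gn:eq:energy-splitting}.

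\textbf{Nonlinear term.} On $B_{2R_k}$, (ii) gives $\|V_k-\chi_kw\|_{L^q(B_{2R_k})}\to0$, and off $B_{2R_k}$ the profile $\omega_k$ vanishes. Split
\[
 \|v_k\|_q^q-\|v_k-\omega_k\|_q^q=\int_{B_{2R_k}}\bigl(|V_k|^q-|V_k-\chi_kw|^q\bigr)(1+o(1)),
\]
since the contributions outside $B_{2R_k}$ cancel. The first piece tends to $\|w\|_q^q$ because $V_k-w\to0$ in $L^q(B_{2R_k})$ and $\|w\|_{L^q(|y|>R_k)}\to0$. The second tends to $0$ by the same estimate, since $V_k-\chi_kw=(V_k-w)+(1-\chi_k)w$. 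This gives \eqref{gn:eq:nonlinear-splitting}; no Brezis--Lieb argument is needed.

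\textbf{Main obstacle.} The hard step is choosing $R_k$ so that strong local $L^q$ convergence persists on the expanding balls $B_{2R_k}$ while the metric distortion $O(\ell_kR_k)$ still vanishes. I would handle it by the standard diagonal extraction: for each $j$, take $k_j$ beyond which $\|V_k-w\|_{L^2\cap L^q(B_{2j})}<1/j$ and $\ell_kj<1/j$, and set $R_k=j$ for $k_j\le k<k_{j+1}$. In the boundary case, the Fermi charts must have a fixed size $r_0$ independent of $k$ so that $B^+_{2R_k\ell_k}$ fits inside a chart; this follows from the uniform coordinate bounds of \cite{EscobarI}.
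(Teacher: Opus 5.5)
Your proposal is correct and is essentially the paper's argument: cut off the profile at a radius $R_k\to\infty$ chosen slowly by a diagonal argument with $\ell_kR_k\to0$, expand the squared norms, and get the nonlinear limit directly from strong local $L^q$ convergence. In the interior case the paper also imposes $2\ell_kR_k<d_g(z_k,\partial M)$, which your extraction needs implicitly and which is available since $d_g(z_k,\partial M)/\ell_k\to\infty$. The only other difference is bookkeeping: the paper takes the limits at fixed $R$ and then diagonalizes, whereas you diagonalize first to get strong convergence on the expanding balls and use a fixed-$R$ comparison only for the weakly convergent gradient cross term.
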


\begin{proof}
At a fixed rescaled radius $R$, let $w_{k,R}=P_{z_k,\ell_k}(\chi_Rw)$ in the interior case and $w_{k,R}=P_{a_k,\ell_k}(\chi_Rw)$ in the boundary case, using the charts from Lemma~\ref{gn:lem:nonzero-profile}; here $\chi_R(y)=\chi(y/R)$. Weak local convergence and the local metric comparison give
\begin{align*}
 \langle u_k,w_{k,R}\rangle_2&\to\langle w,\chi_Rw\rangle_2,&
 \|w_{k,R}\|_2^2&\to\|\chi_Rw\|_2^2,\\
 \ell_k^2\langle\nabla u_k,\nabla w_{k,R}\rangle_2
 &\to\langle\nabla w,\nabla(\chi_Rw)\rangle_2,&
 \ell_k^2\|\nabla w_{k,R}\|_2^2&\to\|\nabla(\chi_Rw)\|_2^2.
\end{align*}
Strong local $L^q$ convergence also gives
\[
 \ell_k^\beta\bigl(\|u_k\|_q^q-\|u_k-w_{k,R}\|_q^q\bigr)
 \longrightarrow\int\bigl(|w|^q-|w-\chi_Rw|^q\bigr).
\]
This is the localized form of the nonlinear splitting of Br\'ezis and Lieb \cite{BrezisLieb1983}. Since $w\in H^1$, one has $\chi_Rw\to w$ in $H^1$ and $L^q$ as $R\to\infty$. In particular, the cutoff-gradient term is bounded by $C R^{-1}\|w\|_{L^2(R<|y|<2R)}$ and tends to zero.

Choose $R=R_k\to\infty$ sufficiently slowly that the preceding limits hold along the diagonal, $\ell_kR_k\to0$, and the supports remain in the coordinate domains. In the interior case require in addition $2\ell_kR_k<d_g(z_k,\partial M)$. Expanding the two squared norms proves \eqref{gn:eq:mass-splitting} and \eqref{gn:eq:energy-splitting}; the nonlinear limit gives \eqref{gn:eq:nonlinear-splitting}.
\end{proof}

\begin{lemma}[An elementary inequality]\label{gn:lem:strict-splitting}
Let $a,b>0$ with $s=a+b>1$. If $0\le m,e\le1$, then
\begin{equation}\label{gn:eq:strict-splitting}
 m^ae^b+(1-m)^a(1-e)^b\le1.
\end{equation}
If $m,e>0$, equality holds only when $m=e=1$.
\end{lemma}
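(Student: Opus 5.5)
The plan is to reduce to a weighted arithmetic–geometric mean inequality. Put $\theta=a/s$ and $1-\theta=b/s$, so $\theta\in(0,1)$. For $x,y\ge0$, weighted AM--GM gives $x^\theta y^{1-\theta}\le\theta x+(1-\theta)y$. Hence
\[
 m^ae^b=\bigl(m^\theta e^{1-\theta}\bigr)^s\le\bigl(\theta m+(1-\theta)e\bigr)^s=:t^s,
\]
and similarly $(1-m)^a(1-e)^b\le(\theta(1-m)+(1-\theta)(1-e))^s=(1-t)^s$, where $t\in[0,1]$. It therefore suffices to show $t^s+(1-t)^s\le1$ for $t\in[0,1]$ and $s>1$. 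This follows from $t^s\le t$ and $(1-t)^s\le1-t$, since $s>1$ and $0\le t,1-t\le1$.

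For the equality statement, assume $m,e>0$ and that equality holds in \eqref{gn:eq:strict-splitting}. Then every inequality in the chain must be an equality.

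\begin{itemize}[leftmargin=*]
\item From $t^s=t$ and $(1-t)^s=1-t$ with $s>1$, we get $t\in\{0,1\}$.
\item Since $m,e>0$, we have $t=\theta m+(1-\theta)e>0$, so $t=1$.
\item Because $m,e\le1$ and $\theta\in(0,1)$, the identity $\theta m+(1-\theta)e=1$ forces $m=e=1$.
\end{itemize}

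Conversely, $m=e=1$ clearly gives equality. There is no real obstacle here. The only point needing care is the equality case, where positivity of $m,e$ excludes $t=0$. Without that assumption, $m=e=0$ would also give equality. Note also that equality in AM--GM is not needed: the scalar step alone already pins down $t$.
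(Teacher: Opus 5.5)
Your proof is correct and follows essentially the paper's route: weighted AM--GM (Young's inequality), then the scalar bound $t^s+(1-t)^s\le1$, with equality only at $t\in\{0,1\}$. The one difference is that you apply AM--GM before raising to the power $s$, so the scalar bound is used once, for $t=\theta m+(1-\theta)e$; the paper instead bounds $m^ae^b\le\frac as m^s+\frac bs e^s$, applies the scalar bound to $m$ and $e$ separately, and so reads off $m,e\in\{0,1\}$ directly in the equality case.
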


\begin{proof}
Weighted Young's inequality bounds the left-hand side by
\[
 \frac as\bigl(m^s+(1-m)^s\bigr)
 +\frac bs\bigl(e^s+(1-e)^s\bigr)\le1.
\]
Since $t^s+(1-t)^s=1$ only at $t=0,1$, positive $m,e$ force $m=e=1$ in the equality case.
\end{proof}

\begin{proof}[Proof of the concentration part of Theorem~\ref{gn:thm:threshold}]
First assume \eqref{gn:eq:blowup-assumptions}. Apply the almost-sharp inequality to $r_k$ in Lemma~\ref{gn:lem:one-profile-splitting}, multiply by $\ell_k^\beta$, and let $k\to\infty$ and then $\eta\downarrow0$. Since $\|r_k\|_2$ is bounded, the lower-order term tends to zero. Thus
\[
 \limsup_k\ell_k^\beta\|r_k\|_q^q
 \le C_*(1-m)^{\alpha/2}(1-e)^{\beta/2}.
\]
The flat inequality, with zero extension if necessary, gives $\|w\|_q^q\le C_*m^{\alpha/2}e^{\beta/2}$. The nonlinear splitting now yields
\[
 C_*\le C_*\left(m^{\alpha/2}e^{\beta/2}
 +(1-m)^{\alpha/2}(1-e)^{\beta/2}\right)\le C_*.
\]
Since $(\alpha+\beta)/2=q/2>1$, Lemma~\ref{gn:lem:strict-splitting} gives $m=e=1$. Moreover $w$ attains the flat sharp constant. The half-space case is impossible by Lemma~\ref{gn:lem:flat-domains}. Hence $d_g(z_k,\partial M)/\ell_k\to\infty$.

The whole-space equality classification and $m=e=1$ give $w=sQ_0(\,\cdot-y_0)$ for some $s\in\{-1,1\}$ and $y_0\in\mathbb R^n$. Include the fixed translation in the choice of centers. Smooth dependence of the coordinate maps, applied to the fixed profile and followed by a diagonal choice of cutoff radii, changes the approximation by $o(1)$ in $L^2$ and by $o(\ell_k^{-1})$ in the gradient norm. Equations~\eqref{gn:eq:mass-splitting} and~\eqref{gn:eq:energy-splitting} give \eqref{gn:eq:single-bubble}, and the translated centers still satisfy \eqref{gn:eq:boundary-repulsion}.

For a maximizing sequence at $C_M=C_*$, a subsequence with bounded gradient norms converges strongly by the direct argument used in the strict case. Otherwise select a subsequence with gradients tending to infinity and apply the preceding proof. The bounded Euclidean-domain statement is Lemma~\ref{gn:lem:flat-domains}.
\end{proof}

\section{Scalar curvature and attainment}
\label{gn:sec:curvature}

\subsection{Moment identities and cutoffs}

For $x\in K\Subset M^\circ$ and sufficiently small $\varepsilon$, define
\begin{equation}\label{gn:eq:curvature-bubble}
 U_{x,\varepsilon}(z)=\varepsilon^{-n/2}
 \chi\!\left(\frac{\Psi_x^{-1}(z)}{\varepsilon R(\varepsilon)}\right)
 Q\!\left(\frac{\Psi_x^{-1}(z)}{\varepsilon}\right),
 \qquad R(\varepsilon)=L|\log\varepsilon|,
\end{equation}
extended by zero outside the normal chart, where $L$ is a sufficiently large fixed constant.

For $j=0,2$, write
\[
 M_j=\int_{\mathbb R^n}|y|^jQ^2,\qquad
 E_j=\int_{\mathbb R^n}|y|^j|\nabla Q|^2,\qquad
 N_j=\int_{\mathbb R^n}|y|^jQ^q.
\]
Exponential decay gives, for every fixed integer $j\ge0$,
\begin{equation}\label{gn:eq:exponential-tail}
 \int_{|y|>R}(1+|y|)^j\bigl(Q^2+|\nabla Q|^2+Q^q\bigr)\,dy
 \le C_j e^{-cR},\qquad R\ge1.
\end{equation}
The cutoff argument of \cite[Lemma~A.1]{EscobarI}, using \eqref{gn:eq:exponential-tail}, bounds all required moment errors by $O(e^{-c'R})$. Choose $L$ so these are $O(\varepsilon^4)$. The cutoff profiles have uniformly bounded third weighted moments.

\begin{lemma}[Identities for the Euclidean optimizer]\label{gn:lem:ground-state-identities}
One has
\begin{equation}\label{gn:eq:Pohozaev}
 M_0=\frac\alpha qN_0,\qquad E_0=\frac\beta qN_0,
\end{equation}
and
\begin{equation}\label{gn:eq:weighted-identity}
 E_2+M_2=N_2+nM_0.
\end{equation}
\end{lemma}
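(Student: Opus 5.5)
Both identities come from testing equation~\eqref{gn:eq:ground-state} against suitable multipliers and integrating by parts. The exponential decay of $Q$ and $\nabla Q$, recorded before Theorem~\ref{gn:thm:curvature}, makes every integration by parts legitimate; all boundary terms vanish. Since $Q$ is radial and smooth, we may freely use $y\cdot\nabla Q=rQ'$.

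\emph{The Pohozaev identities \eqref{gn:eq:Pohozaev}.} Multiplying \eqref{gn:eq:ground-state} by $Q$ gives the Nehari identity
\[
E_0+M_0=N_0 .
\]
Multiplying by $y\cdot\nabla Q$ gives the Pohozaev identity. Two standard computations are used:
\[
\int(-\Delta Q)(y\cdot\nabla Q)=\frac{n-2}{2}E_0,
\qquad
\int F(Q)\,y\cdot\nabla Q=-n\int F(Q)\ \text{ for } F(Q)=\tfrac{Q^2}2,\ \tfrac{Q^q}{q}.
\]
These yield
\[
\frac{n-2}{2}E_0+\frac n2M_0=\frac nqN_0 .
\]
This and the Nehari identity form a linear system in $E_0,M_0$. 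Eliminating $M_0$ gives $E_0=\frac{n(q-2)}{2q}N_0=\frac\beta qN_0$, and then $M_0=N_0-E_0=\frac{q-\beta}{q}N_0=\frac\alpha qN_0$.

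\emph{The weighted identity \eqref{gn:eq:weighted-identity}.} Multiply \eqref{gn:eq:ground-state} by $|y|^2Q$ and integrate. The right side gives $N_2$, and the mass term gives $M_2$. For the Laplacian term,
\[
\int(-\Delta Q)|y|^2Q=\int\nabla Q\cdot\nabla(|y|^2Q)=E_2+\int2Q\,y\cdot\nabla Q .
\]
The last integral equals $\int y\cdot\nabla(Q^2)=-nM_0$. Hence $E_2-nM_0+M_2=N_2$, which is \eqref{gn:eq:weighted-identity}.

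\emph{Main obstacle.} There is no real difficulty; the only point needing care is justifying the integrations by parts on $\mathbb R^n$. I would integrate over $B_R$ and let $R\to\infty$. The boundary terms have the form $R^{j}\int_{\partial B_R}(Q^2+|\nabla Q|^2+Q^q)$ with $j\le 3$, and they tend to zero by the pointwise bound $Q+|\nabla Q|\le Ce^{-c|y|}$. The Pohozaev multiplier also needs second derivatives of $Q$. These are locally bounded by elliptic regularity, since $Q$ is smooth and radial; alternatively, one can first perform the computation with a cutoff $\chi(y/R)$ and let $R\to\infty$ using \eqref{gn:eq:exponential-tail}.
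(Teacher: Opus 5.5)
Your argument is correct and is essentially the paper's: the paper gets $E_0=\frac\beta qN_0$ in one step by differentiating the action along the mass-preserving dilation $t^{n/2}Q(ty)$ at $t=1$. That is the same as testing with $\frac n2Q+y\cdot\nabla Q$, i.e.\ your Nehari and Pohozaev multipliers combined, and the paper derives the weighted identity exactly as you do. Fix two slips in your intermediate formulas: in fact $\int(-\Delta Q)(y\cdot\nabla Q)=-\frac{n-2}{2}E_0$ (with your sign, the Pohozaev identity you display, which is correct, would not follow), and the second formula should read $\int F'(Q)\,y\cdot\nabla Q=-n\int F(Q)$.
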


\begin{proof}
Testing \eqref{gn:eq:ground-state} by $Q$ gives $E_0+M_0=N_0$. For $Q_t(y)=t^{n/2}Q(ty)$, differentiating $\frac12\|\nabla Q_t\|_2^2+\frac12\|Q_t\|_2^2-\frac1q\|Q_t\|_q^q$ at $t=1$ gives $E_0=\beta N_0/q$. Testing the equation by $|y|^2Q$ and using $2\int Q\,y\cdot\nabla Q=-nM_0$ gives \eqref{gn:eq:weighted-identity}. Exponential decay justifies these operations.
\end{proof}

\subsection{The interior expansion}

\begin{proof}[Proof of Theorem~\ref{gn:thm:curvature}]
Let $f_\varepsilon=\chi_{R(\varepsilon)}Q$. In normal coordinates at $x\in K$, uniformly on the shrinking support,
\begin{equation}\label{gn:eq:normal-volume}
 \sqrt{|g(\varepsilon y)|}
 =1-\frac{\varepsilon^2}{6}\operatorname{Ric}_g(x)_{ij}y_i y_j
 +O_K(\varepsilon^3|y|^3).
\end{equation}
For radial $f_\varepsilon$, the Gauss lemma gives $g^{ij}(\varepsilon y)\partial_i f_\varepsilon\partial_j f_\varepsilon=|\partial_r f_\varepsilon|^2$. Radial integration contracts the quadratic volume term to $\operatorname{Scal}_g(x)/n$. Thus
\begin{align}
 \|U_{x,\varepsilon}\|_2^2
 &=M_0-\frac{\operatorname{Scal}_g(x)}{6n}\varepsilon^2M_2+O_K(\varepsilon^3),\label{gn:eq:curvature-mass}\\
 \varepsilon^2\|\nabla U_{x,\varepsilon}\|_2^2
 &=E_0-\frac{\operatorname{Scal}_g(x)}{6n}\varepsilon^2E_2+O_K(\varepsilon^3),\label{gn:eq:curvature-energy}\\
 \varepsilon^\beta\|U_{x,\varepsilon}\|_q^q
 &=N_0-\frac{\operatorname{Scal}_g(x)}{6n}\varepsilon^2N_2+O_K(\varepsilon^3).\label{gn:eq:curvature-nonlinearity}
\end{align}
The remainders are uniform because the third weighted moments are bounded independently of the cutoff radius. Since $N_0/(M_0^{\alpha/2}E_0^{\beta/2})=C_*$, expansion of the quotient yields
\begin{equation}\label{gn:eq:raw-curvature-coefficient}
 \frac{W_M(U_{x,\varepsilon})}{C_*}
 =1-\frac{\operatorname{Scal}_g(x)}{6n}\varepsilon^2
 \left(\frac{N_2}{N_0}-\frac\alpha2\frac{M_2}{M_0}
 -\frac\beta2\frac{E_2}{E_0}\right)+O_K(\varepsilon^3).
\end{equation}
By Lemma~\ref{gn:lem:ground-state-identities},
\begin{align*}
 \frac{N_2}{N_0}-\frac\alpha2\frac{M_2}{M_0}-\frac\beta2\frac{E_2}{E_0}
 &=\frac{N_2}{N_0}-\frac{q(M_2+E_2)}{2N_0}\\
 &=-\frac{q-2}{2}\frac{N_2}{N_0}-\frac{n\alpha}{2}.
\end{align*}
Substituting into \eqref{gn:eq:raw-curvature-coefficient} gives \eqref{gn:eq:curvature-expansion} and \eqref{gn:eq:positive-coefficient}. If scalar curvature is positive at an interior point, $U_{x,\varepsilon}$ has quotient larger than $C_*$ for sufficiently small $\varepsilon$. Theorem~\ref{gn:thm:threshold} gives attainment. The last assertion is the contrapositive.
\end{proof}

\begin{corollary}[Small balls]\label{gn:cor:small-balls}
For a fixed interior point $x$, $C_{B_g(x,r)}\to C_*$ as $r\downarrow0$. If $\operatorname{Scal}_g(x)>0$, then every sufficiently small such ball has $C_{B_g(x,r)}>C_*$ and admits an extremal.
\end{corollary}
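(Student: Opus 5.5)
The plan has two parts. For the first assertion, $C_{B_g(x,r)}\to C_*$, I will rescale the ball to unit size and use the local comparison. For existence, I will place the curvature bubbles of Theorem~\ref{gn:thm:curvature} inside the ball and then apply Theorem~\ref{gn:thm:threshold}.

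\textbf{Convergence.} The lower bound $C_{B_g(x,r)}\ge C_*$ is the first assertion of Theorem~\ref{gn:thm:threshold}. For the upper bound, take $r$ smaller than the injectivity radius at $x$. Every $u\in H^1_0(B_g(x,r))$ can be written as $u=P_{x,r}f$, where $f(y)=r^{n/2}u(\Psi_x(ry))$ lies in $H^1_0(B_1)$ and is supported in $B_1\subset B_{2R}$ with $R=1$. Lemma~\ref{gn:lem:local-comparison} with $\gamma=2$ then gives
\[
 W_{B_g(x,r)}(u)=W_{\mathrm{flat}}(f)\bigl(1+O(r^2)\bigr)\le C_*\bigl(1+O(r^2)\bigr).
\]
The last step uses zero extension to $\mathbb R^n$. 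The important point is that the error in Lemma~\ref{gn:lem:local-comparison} is uniform in $f$: the metric coefficients and the density $\sqrt{|g(ry)|}$ differ from their Euclidean values by $O(r^2)$ uniformly on $B_1$, so each of the three integrals is distorted by a factor $1+O(r^2)$ independent of $f$. Taking the supremum over $u$ gives $C_*\le C_{B_g(x,r)}\le C_*(1+O(r^2))$, which proves the limit.

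\textbf{Strict inequality and attainment.} Assume $\operatorname{Scal}_g(x)>0$. By \eqref{gn:eq:curvature-expansion} there is $\varepsilon_0>0$ such that $W_M(U_{x,\varepsilon})>C_*$ for $0<\varepsilon<\varepsilon_0$; here $M$ can be any fixed compact neighbourhood containing the ball. The support of $U_{x,\varepsilon}$ lies in the geodesic ball of radius $2\varepsilon R(\varepsilon)=2L\varepsilon|\log\varepsilon|$, and this radius tends to zero with $\varepsilon$. Given a small $r$, I choose $\varepsilon<\varepsilon_0$ with $2L\varepsilon|\log\varepsilon|<r$. Then $U_{x,\varepsilon}\in H^1_0(B_g(x,r))$. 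Since the quotient only involves integrals over the support, $W_{B_g(x,r)}(U_{x,\varepsilon})=W_M(U_{x,\varepsilon})>C_*$.

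This holds for every sufficiently small $r$: the quantity $\varepsilon|\log\varepsilon|$ decreases to $0$ as $\varepsilon\downarrow0$, so a suitable $\varepsilon$ exists for every $r>0$. Hence $C_{B_g(x,r)}>C_*$. For small $r$ the ball $B_g(x,r)$ is a smooth compact manifold with boundary, so Theorem~\ref{gn:thm:threshold} applies and shows that $C_{B_g(x,r)}$ is attained by a function positive in the interior.

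The only delicate point is the uniformity in the first part. Lemma~\ref{gn:lem:local-comparison} is stated for fixed $R$ and small $\varepsilon$. Here $R=1$ is fixed and the scale $\varepsilon=r$ tends to zero, so the lemma applies directly with constants that do not depend on $f$.
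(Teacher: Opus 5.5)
Your proposal is correct and follows the paper's proof. The paper obtains $C_{B_g(x,r)}\le C_*(1+Cr^2)$ by zero extension in a normal chart with the metric comparison, gets the lower bound $C_*$ by local insertion, and then places $U_{x,\varepsilon}$ inside $B_g(x,r)$ and invokes Theorems~\ref{gn:thm:curvature} and~\ref{gn:thm:threshold}. Your explicit check that the distortion factors in Lemma~\ref{gn:lem:local-comparison} are independent of $f$ fills in a point the paper leaves implicit.
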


\begin{proof}
Zero extension in a normal chart and the metric comparison give $C_{B_g(x,r)}\le C_*(1+Cr^2)$, while local insertion gives the reverse lower bound $C_*$. If scalar curvature is positive at $x$, choose $\varepsilon$ so that $U_{x,\varepsilon}$ is supported in $B_g(x,r)$ and use Theorem~\ref{gn:thm:curvature}.
\end{proof}

\section{Extremals on manifolds with small holes}
\label{gn:sec:holes}

Throughout this section $(N,g)$ is fixed, smooth, closed, and connected, and $V=\operatorname{Vol}(N)$. Put $\Lambda=\lambda_+(N)>0$, the first positive Laplace eigenvalue. Then
\begin{equation}\label{gn:eq:ambient-Poincare}
 \|f-\overline f\|_2^2\le\Lambda^{-1}\|\nabla f\|_2^2,
 \qquad \overline f=V^{-1}\int_N f.
\end{equation}
We identify Dirichlet functions with their zero extensions to $N$ and suppress the tildes in the proofs. The $H^1$ norm is $\|f\|_{H^1(N)}^2=\|f\|_2^2+\|\nabla f\|_2^2$. All constants in this section depend only on $N$ and the fixed exponent $q$.

\subsection{Small energy and localization near the first eigenspace}

\begin{lemma}[Nonlinear norm at small energy]\label{gn:lem:small-energy-norm}
There are $E_*>0$ and $K<\infty$ such that, if $f\in H^1(N)$, $f\ge0$, $\|f\|_2=1$, and $E=\|\nabla f\|_2^2\le E_*$, then
\begin{equation}\label{gn:eq:small-energy-norm}
 V^{1-q/2}\le\|f\|_q^q\le V^{1-q/2}+KE,
 \qquad \|f-V^{-1/2}\|_{H^1(N)}\le K E^{1/2}.
\end{equation}
\end{lemma}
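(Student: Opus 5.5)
The proof compares $f$ with the constant $V^{-1/2}$ through the ambient Poincaré inequality \eqref{gn:eq:ambient-Poincare}. Then it controls $\|f\|_q^q$ by a Taylor expansion of $t\mapsto|t|^q$ around the mean, using Proposition~\ref{gn:prop:almost-sharp} on $N$ to bound the higher-order part.

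The lower bound is Hölder's inequality: $1=\|f\|_2^2\le V^{1-2/q}\|f\|_q^2$, hence $\|f\|_q^q\ge V^{1-q/2}$.

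For the closeness estimate, write $f=\overline f+h$ with $\int_N h=0$. By \eqref{gn:eq:ambient-Poincare}, $\|h\|_2^2\le\Lambda^{-1}E$. Since $1=V\overline f^2+\|h\|_2^2$, we get $V\overline f^2=1-O(E)$. Because $f\ge0$, we have $\overline f\ge0$, so $|\overline f-V^{-1/2}|\le C\,|V\overline f^2-1|\le CE$ once $E_*$ is small. Then
\[
\|f-V^{-1/2}\|_2\le\|h\|_2+V^{1/2}|\overline f-V^{-1/2}|\le CE^{1/2}.
\]
Adding $\|\nabla f\|_2=E^{1/2}$ gives the $H^1$ bound.

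For the upper bound, use the pointwise inequality
\[
|a+b|^q\le a^q+qa^{q-1}b+C_q\bigl(a^{q-2}b^2+|b|^q\bigr),\qquad a\ge0 .
\]
Apply it with $a=\overline f$ and $b=h$, and integrate. The linear term vanishes since $\int h=0$. This gives
\[
\|f\|_q^q\le V\overline f^{\,q}+C\overline f^{\,q-2}\|h\|_2^2+C\|h\|_q^q .
\]
The first two terms are handled directly. Since $V\overline f^2\le 1$, we have $V\overline f^{\,q}\le V^{1-q/2}$, and the second term is $O(E)$.

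The main point is the term $\|h\|_q^q$, which must be $O(E)$ and not merely $o(1)$. Apply Proposition~\ref{gn:prop:almost-sharp} on the closed manifold $N$ to $h$ with a fixed $\eta$:
\[
\|h\|_q^q\le C\|h\|_2^\alpha\|\nabla h\|_2^\beta+B\|h\|_2^q\le C E^{\alpha/2}E^{\beta/2}+CE^{q/2}=O(E^{q/2}).
\]
This uses $\|\nabla h\|_2^2=E$ and $\|h\|_2^2\le\Lambda^{-1}E$. Since $q>2$, this is $O(E)$ for $E\le E_*$.

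If the expansion inequality is awkward for $q>3$, a fallback is the simpler bound $|a+b|^q\le a^q+qa^{q-1}b+C(a^{q-2}b^2+|b|^q)$ written in the form of a remainder with the mean-value theorem. It covers all $q>2$ when $a\ge0$, which here follows from nonnegativity of $f$ and hence of its mean. This yields $\|f\|_q^q\le V^{1-q/2}+KE$, with $K$ depending only on $N$ and $q$.
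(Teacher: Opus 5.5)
Your proof is correct and follows the paper's argument: the decomposition $f=\overline f+h$, the ambient Poincar\'e inequality, the second-order Taylor bound for $t\mapsto t^q$ around the mean (whose linear term integrates to zero), and H\"older's inequality for the lower bound. The only differences are cosmetic: you bound $\|h\|_q^q=O(E^{q/2})$ by the almost-sharp inequality on $N$ where the paper uses the Sobolev embedding, and your closing ``fallback'' paragraph is unnecessary, since the expansion inequality holds for every $q\ge2$.
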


\begin{proof}
Write $f=a+h$, with $a=\overline f\ge0$ and $\int_Nh=0$. The Poincar\'e inequality and the Sobolev embedding give $\|h\|_2^2\le E/\Lambda$ and $\|h\|_q\le K E^{1/2}$. Since $Va^2=1-\|h\|_2^2$, one has $a=V^{-1/2}+O(E)$. The inequality $|(a+t)^q-a^q-qa^{q-1}t|\le K(t^2+|t|^q)$, valid for these $a$ and $a+t\ge0$, yields
\[
 \int_N f^q=Va^q+O(\|h\|_2^2+\|h\|_q^q)=V^{1-q/2}+O(E).
\]
H\"older's inequality gives the lower bound; the same decomposition gives the $H^1$ estimate.
\end{proof}

Fix a smooth hole $D$ with connected complement. Write $\lambda=\lambda_D$, $\varphi=\varphi_D$, and $S=V^{1-q/2}$. Corollary~\ref{gn:cor:spectral-bounds} gives
\begin{equation}\label{gn:eq:uniform-spectral-lower}
 C_D\ge S\lambda^{-\beta/2}.
\end{equation}
For small $\lambda$ this exceeds $C_*$, so Theorem~\ref{gn:thm:threshold} supplies a nonnegative normalized maximizer.

\begin{lemma}[Energy of near-maximizers]\label{gn:lem:near-maximizer-energy}
For sufficiently small $\lambda$, every nonnegative $f\in H^1_0(M_D)$ with $\|f\|_2=1$ and $\delta=\delta_D(f)\le1/2$ satisfies
\begin{equation}\label{gn:eq:near-maximizer-energy}
 \lambda\le E:=\|\nabla f\|_2^2
 \le\lambda\bigl(1+K(\delta+\lambda)\bigr).
\end{equation}
Moreover,
\begin{equation}\label{gn:eq:leading-constant-proof}
 S\le C_D\lambda^{\beta/2}\le S+K\lambda.
\end{equation}
\end{lemma}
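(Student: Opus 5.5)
The lower bound $E\ge\lambda$ is the Dirichlet Rayleigh principle, since $f\in H^1_0(M_D)$ and $\|f\|_2=1$. The left inequality in \eqref{gn:eq:leading-constant-proof} is \eqref{gn:eq:uniform-spectral-lower}. What remains are the two upper bounds. I will obtain both by playing the definition of the deficit against the small-energy expansion of Lemma~\ref{gn:lem:small-energy-norm}, applied to the zero extension of $f$ to $N$.

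\emph{Step 1: a priori smallness of $E$.} From $\delta\le1/2$ and \eqref{gn:eq:uniform-spectral-lower} we get
\[
\|f\|_q^q=W_{M_D}(f)E^{\beta/2}\ge\tfrac12 C_DE^{\beta/2}\ge\tfrac12 S\lambda^{-\beta/2}E^{\beta/2}.
\]
On the other hand, Proposition~\ref{gn:prop:almost-sharp} on the closed manifold $N$, with $\eta=1$, gives $\|f\|_q^q\le (C_*+1)E^{\beta/2}+B_1$. If $E\ge1$, comparing the two bounds yields $\tfrac12S\lambda^{-\beta/2}\le C_*+1+B_1$, which is impossible for $\lambda$ small; hence $E<1$. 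Then $\|f\|_q^q\le C_*+1+B_1$, so $E^{\beta/2}\le K\lambda^{\beta/2}$, that is, $E\le K\lambda$. This bound uses only constants from $N$ and $q$. In particular $E\le E_*$ once $\lambda$ is small, so Lemma~\ref{gn:lem:small-energy-norm} applies to $f$.

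\emph{Step 2: leading constant.} Apply Step 1 and Lemma~\ref{gn:lem:small-energy-norm} to the maximizer $u_D$, whose deficit is zero:
\[
C_D=\frac{\|u_D\|_q^q}{E_u^{\beta/2}}\le \frac{S+KE_u}{E_u^{\beta/2}}\le \frac{S+K\lambda}{\lambda^{\beta/2}}.
\]
Here we used $E_u\ge\lambda$ and $E_u\le K\lambda$. This is the right inequality in \eqref{gn:eq:leading-constant-proof}.

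\emph{Step 3: the refined energy bound.} For general $f$,
\[
(1-\delta)C_D=\frac{\|f\|_q^q}{E^{\beta/2}}\le\frac{S+KE}{E^{\beta/2}},
\]
and by \eqref{gn:eq:uniform-spectral-lower} the left side is at least $(1-\delta)S\lambda^{-\beta/2}$. Since $E\le K\lambda$, the right side is at most $S(1+K\lambda)E^{-\beta/2}$. Therefore
\[
(E/\lambda)^{\beta/2}\le\frac{1+K\lambda}{1-\delta}\le 1+K(\delta+\lambda),
\]
using $\delta\le1/2$. Taking the power $2/\beta$ gives $E\le\lambda(1+K(\delta+\lambda))$, because $(1+t)^{2/\beta}\le 1+K t$ for bounded $t$.

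The main obstacle is Step 1. Lemma~\ref{gn:lem:small-energy-norm} is only available once $E$ is known to be small, so the almost-sharp inequality must first exclude large energies, and the constants must be uniform over holes. This works because that step uses only the ambient inequality on $N$ through the zero extension.
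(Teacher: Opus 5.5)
Your proof is correct and follows the paper's argument. An ambient almost-sharp inequality on $N$ forces $E\le K\lambda$; the paper uses its shifted-gradient form \eqref{gn:eq:shifted-gradient-GN}, which is equivalent to your $\eta=1$ version. Lemma~\ref{gn:lem:small-energy-norm} then gives $\|f\|_q^q\le S+K\lambda$, and comparison with $C_D\ge S\lambda^{-\beta/2}$ yields the energy bound and, applied to a maximizer, the upper bound on $C_D\lambda^{\beta/2}$. Your case split $E\ge1$ versus $E<1$ is harmless but unnecessary: absorbing $(C_*+1)E^{\beta/2}$ into the left side for small $\lambda$ gives $E^{\beta/2}\le K\lambda^{\beta/2}$ directly.
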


\begin{proof}
The inequality on $N$ \eqref{gn:eq:shifted-gradient-GN} gives
\[
 \tfrac12 S\lambda^{-\beta/2}\le W_{M_D}(f)
 \le K(1+E^{-\beta/2}),
\]
so $E\le K\lambda$ for small $\lambda$. The Rayleigh principle gives $E\ge\lambda$. By Lemma~\ref{gn:lem:small-energy-norm}, $\|f\|_q^q\le S+K\lambda$, whence
\[
 \frac E\lambda\le
 \left(\frac{1+K\lambda}{1-\delta}\right)^{2/\beta}
 \le1+K'(\delta+\lambda).
\]
This proves \eqref{gn:eq:near-maximizer-energy}. For a maximizer $u$, it also gives $\|u\|_q^q\le S+K\lambda$. Together with $\|\nabla u\|_2^2\ge\lambda$ and \eqref{gn:eq:uniform-spectral-lower}, this proves \eqref{gn:eq:leading-constant-proof}.
\end{proof}

The second Dirichlet eigenvalue satisfies
\begin{equation}\label{gn:eq:ambient-gap}
 \lambda_2(M_D)\ge\Lambda.
\end{equation}
Indeed, every two-dimensional subspace of zero-extended Dirichlet functions contains a nonzero function of mean zero on $N$, and the min--max principle applies. Set
\[
 X_D=\{z\in H^1_0(M_D):\langle z,\varphi\rangle_2=0\},\qquad
 B_D(z,h)=\int_N\nabla z\cdot\nabla h-\lambda\int_Nzh.
\]
For $\lambda\le\Lambda/2$, \eqref{gn:eq:ambient-gap} gives
\begin{equation}\label{gn:eq:complement-gap}
 c_N\|z\|_{H^1(N)}^2\le B_D(z,z)\le\|z\|_{H^1(N)}^2
 \qquad(z\in X_D),
\end{equation}
where $c_N>0$ is independent of $D$. If $f\ge0$ and $\|f\|_2=1$, write
\begin{equation}\label{gn:eq:sphere-coordinates}
 f=F_D(z):=a(z)\varphi+z,\qquad
 a(z)=\sqrt{1-\|z\|_2^2},\qquad z\in X_D.
\end{equation}
This representation uses $a(z)=\langle f,\varphi\rangle_2>0$. Orthogonality and the eigenfunction equation give the exact identity
\begin{equation}\label{gn:eq:exact-sphere-energy}
 \|\nabla F_D(z)\|_2^2=\lambda+B_D(z,z).
\end{equation}
\subsection{Strict concavity and quantitative stability}

For sufficiently small $\lambda$, define the convex neighborhood
\[
 \mathcal U_D=\{z\in X_D:B_D(z,z)<\lambda/2\}.
\]
By \eqref{gn:eq:complement-gap}, $a(z)$ is uniformly bounded away from zero on this set. We use \eqref{gn:eq:sphere-coordinates} for all real $z\in\mathcal U_D$, and put
\begin{equation}\label{gn:eq:log-quotient}
 J_D(z)=\int_N|F_D(z)|^q,\qquad
 \mathcal L_D(z)=\log J_D(z)-\frac\beta2\log\bigl(\lambda+B_D(z,z)\bigr).
\end{equation}
Thus $\mathcal L_D(z)=\log W_{M_D}(F_D(z))$.

\begin{lemma}[Uniform strict concavity]\label{gn:lem:strict-concavity}
For sufficiently small $\lambda$,
\begin{equation}\label{gn:eq:log-Hessian}
 D^2\mathcal L_D(z)[h,h]
 \le-\frac c\lambda\|h\|_{H^1(N)}^2
 \qquad(z\in\mathcal U_D,\ h\in X_D),
\end{equation}
where $c>0$ depends only on $N$ and $q$. On $\mathcal U_D$, the map $F_D$ is uniformly Lipschitz from $H^1$ to $H^1$.
\end{lemma}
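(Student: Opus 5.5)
The plan is to split $\mathcal L_D$ into its nonlinear part $\log J_D$ and its energy part $-\frac\beta2\log(\lambda+B_D(z,z))$. We show that the energy part alone has Hessian $\le-c\lambda^{-1}\|h\|_{H^1}^2$ on all of $\mathcal U_D$. We show that $D^2\log J_D$ is bounded above by $K\|h\|_{H^1}^2$ with $K$ independent of $D$. For $\lambda$ small the first term dominates. The key point is to keep the exact combination of the two energy terms rather than estimating them separately.

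\emph{Energy part.} Write $b=B_D(z,z)$. Differentiating twice gives
\[
 D^2\Bigl[-\tfrac\beta2\log(\lambda+b)\Bigr][h,h]
 =-\frac{\beta B_D(h,h)}{\lambda+b}+\frac{2\beta B_D(z,h)^2}{(\lambda+b)^2}.
\]
Since $B_D$ is positive definite on $X_D$ by \eqref{gn:eq:complement-gap}, Cauchy--Schwarz gives $B_D(z,h)^2\le b\,B_D(h,h)$. Hence the expression is at most
\[
 \beta B_D(h,h)\,\frac{b-\lambda}{(\lambda+b)^2}.
\]
On $\mathcal U_D$ we have $b<\lambda/2$. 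Therefore $b-\lambda\le-\lambda/2$ and $(\lambda+b)^2\le\frac94\lambda^2$, so the bound is $\le-\frac{2\beta}{9\lambda}B_D(h,h)\le-\frac{2\beta c_N}{9\lambda}\|h\|_{H^1(N)}^2$, again by \eqref{gn:eq:complement-gap}. This uses the full set $\mathcal U_D$ without shrinking it.

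\emph{Lipschitz bound and nonlinear part.} On $\mathcal U_D$, \eqref{gn:eq:complement-gap} gives $\|z\|_{H^1}^2\le\lambda/(2c_N)$, so $a(z)\ge1/2$ once $\lambda$ is small. We compute
\[
 DF_D(z)[h]=h-\frac{\langle z,h\rangle_2}{a}\varphi,\qquad
 D^2F_D(z)[h,h]=-\Bigl(\frac{\|h\|_2^2}{a}+\frac{\langle z,h\rangle_2^2}{a^3}\Bigr)\varphi.
\]
Since $\|\varphi\|_{H^1}^2=1+\lambda$, this gives $\|DF_D[h]\|_{H^1}\le K\|h\|_{H^1}$ and $\|D^2F_D[h,h]\|_{H^1}\le K\|h\|_{H^1}^2$. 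The mean value theorem on the convex set $\mathcal U_D$ then gives the uniform Lipschitz property of $F_D$.

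Because $q>2$, the map $t\mapsto|t|^q$ is $C^2$, so $J_D$ is twice differentiable with
\[
 D^2J_D[h,h]=q(q-1)\int_N|F|^{q-2}(DF[h])^2+q\int_N|F|^{q-2}F\,D^2F[h,h].
\]
By H\"older's inequality and the Sobolev embedding $H^1(N)\hookrightarrow L^q(N)$, which uses only the fixed manifold $N$ and is applied to zero extensions, we get $|D^2J_D[h,h]|\le K\|F\|_q^{q-2}\|h\|_{H^1}^2\le K'\|h\|_{H^1}^2$. Here $\|F\|_{H^1}$ is bounded on $\mathcal U_D$.

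For the lower bound on $J_D$: $\|F\|_2^2=a^2+\|z\|_2^2=1$, so H\"older's inequality gives $J_D\ge V^{1-q/2}=S$. Then
\[
 D^2\log J_D=\frac{D^2J_D}{J_D}-\frac{(DJ_D)^2}{J_D^2}\le\frac{K'}{S}\|h\|_{H^1}^2.
\]

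\emph{Conclusion.} Adding the two parts,
\[
 D^2\mathcal L_D(z)[h,h]\le\Bigl(-\frac{2\beta c_N}{9\lambda}+K''\Bigr)\|h\|_{H^1}^2,
\]
which is $\le-\frac{c}{\lambda}\|h\|_{H^1}^2$ with $c=\beta c_N/9$ once $\lambda\le\beta c_N/(9K'')$. All constants depend only on $N$ and $q$.

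The main obstacle is the energy Hessian: the positive term $2\beta B_D(z,h)^2/(\lambda+b)^2$ is of the same order $\lambda^{-1}$ as the good term. Only the exact Cauchy--Schwarz combination, producing the factor $b-\lambda<-\lambda/2$, keeps the estimate valid on all of $\mathcal U_D$.
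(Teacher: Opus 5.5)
Your proposal is correct and follows the paper's proof essentially step for step. You use the same derivative bounds on $F_D$ to get the Lipschitz property and a $D$-independent bound on $D^2\log J_D$ (via $J_D\ge S$ and the Sobolev embedding on $N$), and the same Cauchy--Schwarz combination $B_D(z,h)^2\le B_D(z,z)B_D(h,h)$ with $B_D(z,z)<\lambda/2$ to reach $-\frac{2\beta}{9\lambda}B_D(h,h)$, which is the paper's $\frac{4}{9\lambda}$ bound multiplied by $\beta/2$; the only cosmetic difference is that you bound $D^2\log J_D$ from above by discarding $-(DJ_D)^2/J_D^2$, whereas the paper bounds it in absolute value, and either suffices.
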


\begin{proof}
The derivatives of $a$ are
\[
 Da(z)[h]=-\frac{\langle z,h\rangle_2}{a(z)},\qquad
 D^2a(z)[h,h]=-\frac{\|h\|_2^2}{a(z)}
 -\frac{\langle z,h\rangle_2^2}{a(z)^3}.
\]
Since $a$ is bounded below and $\|\varphi\|_{H^1}^2=1+\lambda$, these give
\begin{equation}\label{gn:eq:sphere-derivatives}
 \|DF_D(z)[h]\|_{H^1}\le K\|h\|_{H^1},\qquad
 \|D^2F_D(z)[h,h]\|_{H^1}\le K\|h\|_{H^1}^2.
\end{equation}
The first bound gives Lipschitz continuity. Since $2<q<2^*$, the nonlinear integral is $C^2$ on $H^1(N)$. Writing $F=F_D(z)$,
\begin{align*}
 DJ_D(z)[h]&=q\int_N|F|^{q-2}F\,DF_D(z)[h],\\
 D^2J_D(z)[h,h]
 &=q(q-1)\int_N|F|^{q-2}\bigl(DF_D(z)[h]\bigr)^2
 +q\int_N|F|^{q-2}F\,D^2F_D(z)[h,h].
\end{align*}
Since $\|F\|_2=1$, $\|F\|_{H^1}^2\le1+3\lambda/2$, and $J_D(z)\ge S$ by H\"older's inequality, the Sobolev embedding on $N$ gives
\begin{equation}\label{gn:eq:nonlinear-log-Hessian}
 |D^2\log J_D(z)[h,h]|\le K\|h\|_{H^1}^2.
\end{equation}
This holds also for sign-changing $F$.

Put $t=B_D(z,z)$. The Cauchy--Schwarz inequality for the positive form $B_D$ gives
\begin{align*}
 D^2\log\bigl(\lambda+B_D(z,z)\bigr)[h,h]
 &=\frac{2B_D(h,h)}{\lambda+t}
 -\frac{4B_D(z,h)^2}{(\lambda+t)^2}\\
 &\ge\frac{2(\lambda-t)}{(\lambda+t)^2}B_D(h,h)
 \ge\frac{4}{9\lambda}B_D(h,h),
\end{align*}
since $0\le t<\lambda/2$. Together with \eqref{gn:eq:nonlinear-log-Hessian} and \eqref{gn:eq:complement-gap}, this gives \eqref{gn:eq:log-Hessian} for sufficiently small $\lambda$.
\end{proof}

\begin{proof}[Uniqueness and stability in Theorem~\ref{gn:thm:small-holes}]
By \eqref{gn:eq:near-maximizer-energy}, choose $\delta_0$ and then $\lambda_0$ such that every admissible $f$ with $\delta_D(f)\le\delta_0$ is represented by $f=F_D(z)$ with $B_D(z,z)<\lambda/4$. In particular, this applies to every nonnegative normalized maximizer. Taking absolute values preserves the quotient on real functions, so each such maximizer also maximizes the quotient over all real functions. Its coordinate is therefore an interior critical point of $\mathcal L_D$. Existence follows from \eqref{gn:eq:uniform-spectral-lower} and Theorem~\ref{gn:thm:threshold}. Strict concavity gives a unique such point $z_D$; set $u_D=F_D(z_D)$.

For $f=F_D(z)$ with $\delta=\delta_D(f)\le\delta_0$, integrate \eqref{gn:eq:log-Hessian} on the segment from $z_D$ to $z$, using $D\mathcal L_D(z_D)=0$. This gives
\[
 \frac{c}{2\lambda}\|z-z_D\|_{H^1}^2
 \le\mathcal L_D(z_D)-\mathcal L_D(z)
 =-\log(1-\delta)\le2\delta.
\]
The Lipschitz estimate for $F_D$ now proves \eqref{gn:eq:deficit-stability}.
\end{proof}

\subsection{Approximation by the first eigenfunction}

\begin{lemma}[First variation at the eigenfunction]\label{gn:lem:eigenfunction-residual}
For small $\lambda$,
\begin{equation}\label{gn:eq:eigenfunction-residual}
 |D\mathcal L_D(0)[h]|\le K\lambda^{1/2}\|h\|_{H^1(N)}
 \qquad(h\in X_D).
\end{equation}
\end{lemma}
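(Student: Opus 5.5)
We compute $D\mathcal L_D(0)[h]$ directly from \eqref{gn:eq:log-quotient}. At $z=0$ one has $F_D(0)=\varphi$, $a(0)=1$, and $Da(0)[h]=-\langle 0,h\rangle_2=0$, so $DF_D(0)[h]=h$. The quadratic term has vanishing derivative: $DB_D(0,0)[h]=2B_D(0,h)=0$. Hence
\[
 D\mathcal L_D(0)[h]=\frac{q}{J_D(0)}\int_N\varphi^{q-1}h,
\]
with $J_D(0)=\|\varphi\|_q^q\ge S$ by H\"older's inequality. It therefore suffices to show $|\int_N\varphi^{q-1}h|\le K\lambda^{1/2}\|h\|_{H^1(N)}$ for $h\in X_D$.

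The key observation is that $h\in X_D$ is $L^2$-orthogonal to $\varphi$. It is not orthogonal to constants, however, so we will not subtract a constant from $h$. Instead, we subtract a constant from $\varphi^{q-1}$ and control the projection of $h$ onto constants separately. Write $c=V^{-1/2}$ and
\[
 \int_N\varphi^{q-1}h=\int_N(\varphi^{q-1}-c^{q-1})h+c^{q-1}\int_Nh.
\]

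For the mean of $h$, use $\langle h,\varphi\rangle_2=0$ to write $\int_N h=V^{1/2}\langle h,c-\varphi\rangle_2$. This gives $|\int_Nh|\le K\|\varphi-c\|_2\|h\|_2$. Lemma~\ref{gn:lem:small-energy-norm} applies to $\varphi$, which is nonnegative with $\|\varphi\|_2=1$ and $\|\nabla\varphi\|_2^2=\lambda\le E_*$. It yields $\|\varphi-c\|_{H^1(N)}\le K\lambda^{1/2}$, so this term is $O(\lambda^{1/2})\|h\|_2$.

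For the first term, use the pointwise bound $|s^{q-1}-c^{q-1}|\le K(|s-c|+|s-c|^{q-1})$ for $s\ge0$, which holds since $q-1>1$. Apply H\"older's inequality with exponents $q'$ and $q$, so that $\varphi^{q-1}-c^{q-1}$ is measured in $L^{q/(q-1)}$. This gives
\[
 \Bigl|\int_N(\varphi^{q-1}-c^{q-1})h\Bigr|
 \le K\bigl(\|\varphi-c\|_{q}+\|\varphi-c\|_q^{q-1}\bigr)\|h\|_q.
\]
By the Sobolev embedding on $N$ (recall $q<2^*$), $\|\varphi-c\|_q\le K\|\varphi-c\|_{H^1}\le K\lambda^{1/2}$ and $\|h\|_q\le K\|h\|_{H^1}$. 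Since $\lambda^{(q-1)/2}\le\lambda^{1/2}$ for small $\lambda$, this term is also $O(\lambda^{1/2})\|h\|_{H^1}$. Combining the two terms and dividing by $J_D(0)\ge S$ proves \eqref{gn:eq:eigenfunction-residual}.

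The only delicate point is that $h\in X_D$ need not have mean zero. Orthogonality to $\varphi$ together with $\varphi\approx c$ resolves this. All constants come from Lemma~\ref{gn:lem:small-energy-norm} and the Sobolev constant of $N$, so they are uniform over holes.
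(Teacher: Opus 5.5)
Your proof is correct and is essentially the paper's argument. The paper subtracts $c_0^{q-2}\varphi$, which is orthogonal to $h$, from $\varphi^{q-1}$ in a single step; this amounts to your constant subtraction $\varphi^{q-1}-c^{q-1}$ combined with your mean term $c^{q-1}\int_N h=c^{q-2}\langle h,c-\varphi\rangle_2$. After that, the same H\"older, Sobolev, and Lemma~\ref{gn:lem:small-energy-norm} estimates finish both arguments.
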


\begin{proof}
The first derivative of $\lambda+B_D(z,z)$ vanishes at zero and $DF_D(0)[h]=h$. Let $c_0=V^{-1/2}$. Orthogonality to $\varphi$ gives
\[
 D\mathcal L_D(0)[h]
 =\frac q{\int_N\varphi^q}\int_N
 \bigl(\varphi^{q-1}-c_0^{q-2}\varphi\bigr)h.
\]
The pointwise bound $|t^{q-1}-c_0^{q-2}t|\le K(1+t^{q-2})|t-c_0|$ for $t\ge0$ and H\"older's inequality, with $q'=q/(q-1)$, give
\[
 \|\varphi^{q-1}-c_0^{q-2}\varphi\|_{q'}
 \le K(1+\|\varphi\|_q^{q-2})\|\varphi-c_0\|_q.
\]
Lemma~\ref{gn:lem:small-energy-norm}, applied to $\varphi$, gives $\|\varphi-c_0\|_{H^1}\le K\lambda^{1/2}$. The Sobolev embedding and $\int_N\varphi^q\ge S$ prove \eqref{gn:eq:eigenfunction-residual}.
\end{proof}

\begin{proof}[Spectral estimates in Theorem~\ref{gn:thm:small-holes}]
Equation~\eqref{gn:eq:leading-constant-proof} proves \eqref{gn:eq:spectral-equivalent}. By \eqref{gn:eq:near-maximizer-energy} with zero deficit and Lemma~\ref{gn:lem:small-energy-norm}, $\|u_D-V^{-1/2}\|_{H^1}=O(\lambda^{1/2})$, proving \eqref{gn:eq:constant-approximation}.

Using $D\mathcal L_D(z_D)=0$, integrate the Hessian on the segment from $0$ to $z_D$. Lemmas~\ref{gn:lem:strict-concavity} and~\ref{gn:lem:eigenfunction-residual} give
\[
 \frac c\lambda\|z_D\|_{H^1}^2
 \le D\mathcal L_D(0)[z_D]
 \le K\lambda^{1/2}\|z_D\|_{H^1}.
\]
Thus $\|z_D\|_{H^1}=O(\lambda^{3/2})$. Since $a(z_D)=1+O(\|z_D\|_2^2)=1+O(\lambda^3)$ and $u_D=a(z_D)\varphi+z_D$, this proves \eqref{gn:eq:eigenfunction-approximation}. The exact identity \eqref{gn:eq:exact-sphere-energy} gives
\[
 0\le\|\nabla u_D\|_2^2-\lambda
 =B_D(z_D,z_D)\le\|z_D\|_{H^1}^2=O(\lambda^3),
\]
which is \eqref{gn:eq:energy-asymptotic}. Finally, concavity and maximality imply
\[
 0\le\log\frac{C_D}{W_{M_D}(\varphi_D)}
 =\mathcal L_D(z_D)-\mathcal L_D(0)
 \le D\mathcal L_D(0)[z_D]=O(\lambda^2).
\]
Exponentiation proves \eqref{gn:eq:eigenfunction-quotient}.
\end{proof}

\begin{remark}\label{gn:rem:vanishing-hole-volume}
Since $\widetilde\varphi_D=0$ on $D$, Lemma~\ref{gn:lem:small-energy-norm} gives $V^{-1}\operatorname{Vol}(D)\le\|\widetilde\varphi_D-V^{-1/2}\|_2^2=O(\lambda_D)$. Thus vanishing hole volume follows from the spectral assumption.
\end{remark}

\subsection{Capacity and the first eigenvalue}

The admissible class in \eqref{gn:eq:capacity-definition-intro} is nonempty and weakly closed. Poincar\'e gives a unique minimizer of positive energy. Smoothness of $\partial D$ ensures that $1-v\in H^1_0(M_D)$ for every admissible $v$.

Courtois \cite[Theorem~1.3]{Courtois1995} proved \eqref{gn:eq:capacity-spectral-equivalent} for this capacity. We use the following two-sided estimate.

\begin{proposition}[Spectral-capacity comparison]\label{gn:prop:capacity-spectral}
Let $c_D=\operatorname{cap}_N(D)$. Then
\begin{equation}\label{gn:eq:capacity-spectral-bounds}
 \frac{c_D}{V+c_D/\Lambda}\le\lambda_D\le\frac{c_D}{V}.
\end{equation}
Consequently $c_D\to0$ if and only if $\lambda_D\to0$, and in this limit
\begin{equation}\label{gn:eq:capacity-spectral-equivalent}
 \lambda_D=\frac{c_D}{V}\bigl(1+O(c_D)\bigr).
\end{equation}
\end{proposition}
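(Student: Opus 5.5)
Both inequalities in \eqref{gn:eq:capacity-spectral-bounds} come from test functions, using the correspondence $v\leftrightarrow 1-v$ between admissible capacity functions and Dirichlet functions on $M_D$. The equivalence and \eqref{gn:eq:capacity-spectral-equivalent} then follow by elementary algebra.

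\emph{Upper bound.} Let $v$ be the capacity minimizer. Then $1-v\in H^1_0(M_D)$, so it is an admissible Rayleigh test function. Its gradient energy is $\int_N|\nabla v|^2=c_D$. Since $\int_Nv=0$, its squared $L^2$ norm is
\[
 \int_N(1-v)^2=V-2\int_Nv+\int_Nv^2=V+\|v\|_2^2\ge V.
\]
Hence $\lambda_D\le c_D/(V+\|v\|_2^2)\le c_D/V$.

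\emph{Lower bound.} Start from the zero-extended eigenfunction $\varphi=\varphi_D$ with $\|\varphi\|_2=1$. Put $\mu=\overline\varphi=V^{-1}\int_N\varphi$, which is positive, and set $v=1-\varphi/\mu$. Then $v=1$ on $D$ and $\int_Nv=0$, so $v$ is admissible. Therefore
\[
 c_D\le\int_N|\nabla v|^2=\lambda/\mu^2,\qquad\hbox{that is,}\qquad \lambda\ge c_D\mu^2.
\]
To bound $\mu$ from below, apply \eqref{gn:eq:ambient-Poincare}:
\[
 1=\|\varphi\|_2^2=V\mu^2+\|\varphi-\mu\|_2^2\le V\mu^2+\lambda/\Lambda,
\]
so $\mu^2\ge(1-\lambda/\Lambda)/V$. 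Combining the two estimates gives
\[
 \lambda\ge \frac{c_D}{V}\Bigl(1-\frac{\lambda}{\Lambda}\Bigr).
\]
Solving for $\lambda$ yields
\[
 \lambda\Bigl(1+\frac{c_D}{V\Lambda}\Bigr)\ge\frac{c_D}{V},
\]
which is exactly the lower bound $c_D/(V+c_D/\Lambda)\le\lambda_D$.

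\emph{Consequences.} The two-sided bound immediately shows that $c_D\to0$ if and only if $\lambda_D\to0$. For \eqref{gn:eq:capacity-spectral-equivalent}, expand
\[
 \frac{c_D}{V+c_D/\Lambda}=\frac{c_D}{V}\bigl(1+O(c_D)\bigr),
\]
with a constant depending only on $V$ and $\Lambda$.

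The one step that needs real care is the admissibility claims, and they are mild. First, $\mu>0$, because $\varphi>0$ on the connected set $M_D$. Second, $1-v\in H^1_0(M_D)$, which the paper has already noted follows from smoothness of $\partial D$. Third, the zero extension of $\varphi$ equals $1$ after the transformation $v=1-\varphi/\mu$ almost everywhere on $D$. This holds because $\varphi=0$ on $D$, hence $v=1$ there; since $\partial D$ is smooth, it has measure zero.
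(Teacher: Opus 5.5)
Your proposal is correct and follows the paper's proof essentially line for line. You use the capacitary minimizer $1-v_D$ as a Rayleigh test function for the upper bound, and the admissible function $1-\varphi_D/\overline{\varphi_D}$ together with the ambient Poincar\'e inequality \eqref{gn:eq:ambient-Poincare} for the lower bound, then rearrange.
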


\begin{proof}
For the capacitary minimizer $v_D$, the Dirichlet test function $1-v_D$ has squared norm $V+\|v_D\|_2^2$ and energy $c_D$, giving $\lambda_D\le c_D/V$.

For the other direction, let $a=V^{-1}\int_N\varphi_D>0$. The function $v=1-\varphi_D/a$ is admissible for the capacity and $c_Da^2\le\lambda_D$. By \eqref{gn:eq:ambient-Poincare},
\[
 Va^2=1-\|\varphi_D-a\|_2^2\ge1-\lambda_D/\Lambda.
\]
Combining the last two inequalities and rearranging gives the lower bound in \eqref{gn:eq:capacity-spectral-bounds}. The equivalence and \eqref{gn:eq:capacity-spectral-equivalent} follow.
\end{proof}

Substituting \eqref{gn:eq:capacity-spectral-equivalent} into \eqref{gn:eq:spectral-equivalent}, with $1-q/2+\beta/2=1-\alpha/2$, proves \eqref{gn:eq:capacity-equivalent} and completes Theorem~\ref{gn:thm:small-holes}.

\begin{remark}[Torsional representation of the capacity]\label{gn:rem:torsion}
If $-\Delta_gt_D=1$ in $M_D$ and $t_D\in H^1_0(M_D)$, then $\operatorname{cap}_N(D)=V^2/\int_{M_D}t_D$. Indeed, $w=1-v$ has $\int w=V$ and $V=\int\nabla w\cdot\nabla t_D$. Cauchy--Schwarz gives the lower bound, with equality at $w=Vt_D/\int t_D$.
\end{remark}

\subsection{Geodesic balls}

Write $\omega_{n-1}=|\mathbb S^{n-1}|$ and $\kappa_n=(n-2)\omega_{n-1}$ for $n\ge3$.

\begin{lemma}[Capacity of a small ball]\label{gn:lem:ball-capacity}
For a fixed $x\in N$,
\begin{equation}\label{gn:eq:ball-capacity}
 \operatorname{cap}_N(B_g(x,r))\sim
 \begin{cases}
 \kappa_n r^{n-2},&n\ge3,\\
 2\pi/|\log r|,&n=2,
 \end{cases}
 \qquad r\downarrow0.
\end{equation}
\end{lemma}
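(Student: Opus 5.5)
We prove matching upper and lower bounds for the capacity with the mean-zero constraint removed, and then show that the constraint changes the value only by a relative $o(1)$. Let
\[
 \operatorname{cap}^0(D)=\inf\Bigl\{\int_N|\nabla v|^2:\ v\in H^1(N),\ v=1\ \hbox{a.e. on }D\Bigr\}
\]
be the capacity without the mean-zero condition. This infimum is $0$ on a closed manifold (take $v\equiv1$), so we localize. Fix $\rho>0$ small, independent of $r$, and let $\mathrm{cap}(r,\rho)$ denote the Dirichlet condenser capacity of $B_g(x,r)$ relative to $B_g(x,\rho)$. In normal coordinates the metric is $\delta+O(|y|^2)$, and the condenser capacity is monotone under bi-Lipschitz comparison of quadratic forms. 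Hence $\mathrm{cap}(r,\rho)$ equals the Euclidean condenser capacity up to a factor $1+O(\rho^2)$. For $n\ge3$ the Euclidean value is $\kappa_n/(r^{2-n}-\rho^{2-n})\sim\kappa_nr^{n-2}$. For $n=2$ it is $2\pi/\log(\rho/r)\sim2\pi/|\log r|$. Letting $r\downarrow0$ first and then $\rho\downarrow0$ gives the asserted asymptotic $\kappa(r)$, provided we compare $\operatorname{cap}_N$ with $\mathrm{cap}(r,\rho)$ up to a factor $1+o(1)$.

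\emph{Upper bound.} Let $\psi$ be the condenser potential, so $\psi=1$ on $B_r$, $\psi=0$ outside $B_\rho$, and $\int|\nabla\psi|^2=\mathrm{cap}(r,\rho)$. Set $v=\psi-\overline\psi\,\theta$, where $\theta$ is a fixed smooth function supported away from $B_\rho$ with $\int_N\theta=V$. Then $v=1$ on $B_r$ and $\int_N v=0$. The explicit radial profile gives $\overline\psi=O(\rho^{n-2}r^{n-2})$ for $n\ge3$ and $\overline\psi=O(1/|\log r|)$ for $n=2$, so $\overline\psi\to0$. Therefore
\[
 \int_N|\nabla v|^2=\mathrm{cap}(r,\rho)+\overline\psi^{\,2}\|\nabla\theta\|_2^2,
\]
since the cross term vanishes by disjoint supports. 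The correction is $o(\mathrm{cap}(r,\rho))$: for $n\ge3$ it is $O(r^{2n-4})$, and for $n=2$ it is $O(|\log r|^{-2})$.

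\emph{Lower bound.} This is the main obstacle, because the competitor $v$ need not vanish near $\partial B_\rho$. Let $v$ be admissible with energy $c=\operatorname{cap}_N(B_r)$. By the upper bound, $c\to0$, and the mean-zero Poincaré inequality \eqref{gn:eq:ambient-Poincare} gives $\|v\|_{H^1}^2\le(1+\Lambda^{-1})c\to0$. On the annulus $A=B_\rho\setminus B_{\rho/2}$, choose a cutoff $\eta$ equal to $1$ on $B_{\rho/2}$ and supported in $B_\rho$, and compare $v$ with $\eta v$. The function $\eta v$ is admissible for a condenser problem from $B_r$ to $B_\rho$. Its energy is at most
\[
 (1+\epsilon)\int|\nabla v|^2+C_{\epsilon,\rho}\int_A v^2.
\]
The last term is only $O(c)$, not $o(c)$, so we need a finer estimate. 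We split $v=1$ on $B_r$ into its spherical mean profile and use a two-step radial argument.

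Write $m(s)$ for the spherical average of $v$ on $\partial B_s$. Then $m(r)=1$, and by Cauchy–Schwarz in the radial variable,
\[
 \int_{B_{\rho/2}\setminus B_r}|\nabla v|^2\ \ge\ (1-O(\rho^2))\,\frac{\bigl(m(r)-m(\rho/2)\bigr)^2}{\int_r^{\rho/2}\frac{ds}{\omega_{n-1}s^{n-1}}}.
\]
It remains to show $m(\rho/2)=o(1)$. This follows from the trace inequality $|m(\rho/2)|\le C_\rho\|v\|_{H^1}\le C_\rho c^{1/2}\to0$. Hence $c\ge(1-o(1))\,\mathrm{cap}(r,\rho/2)\,(1-O(\rho^2))$. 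Combining this with the upper bound and letting $\rho\downarrow0$ proves \eqref{gn:eq:ball-capacity} in both dimension regimes.
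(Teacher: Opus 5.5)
Your proof is correct and follows the paper's argument essentially step for step. The lower bound (spherical means, radial Cauchy--Schwarz with the $1+O(\rho^2)$ metric comparison, and $m(\rho/2)\to0$ from the trace bound plus the mean-zero Poincar\'e inequality) is exactly the paper's. The only difference is in the upper bound: the paper restores the mean-zero condition by $(f_r-a_r)/(1-a_r)$, which needs only $a_r\to0$, whereas your distant bump $\theta$ needs the rate $\overline\psi^{\,2}=o(\mathrm{cap}(r,\rho))$. That rate should be computed for the explicit Euclidean radial profile used as the competitor, not for the Riemannian condenser potential. It reads $\overline\psi=O(\rho^2r^{n-2})$ rather than $O(\rho^{n-2}r^{n-2})$, which is harmless since $\rho$ is fixed.
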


\begin{proof}
Fix $0<\delta$ below the normal-coordinate radius at $x$. Let $f_r$ equal one on $B_r$, zero outside $B_\delta$, and on the intervening annulus set
\[
 f_r(s)=\begin{cases}
 \displaystyle\frac{s^{2-n}-\delta^{2-n}}{r^{2-n}-\delta^{2-n}},&n\ge3,\\[2mm]
 \displaystyle\frac{\log(\delta/s)}{\log(\delta/r)},&n=2.
 \end{cases}
\]
The radial energy in the Riemannian metric differs by a factor $1+O(\delta^2)$ from its Euclidean value
\[
 \frac{\omega_{n-1}}{\displaystyle\int_r^\delta s^{1-n}\,ds}.
\]
Since $0\le f_r\le1$ and $f_r\to0$ almost everywhere, $a_r=V^{-1}\int_Nf_r\to0$. The function $(f_r-a_r)/(1-a_r)$ is admissible for the mean-zero capacity. This gives the required upper bound, up to a factor tending to $1+O(\delta^2)$.

Let $v_r$ be the mean-zero capacitary minimizer. The upper bound and \eqref{gn:eq:ambient-Poincare} imply $v_r\to0$ in $H^1(N)$. Hence its trace on the fixed sphere of radius $\delta$ tends to zero in $L^2$. In Euclidean polar coordinates inside the normal chart, let $b_r(s)$ be the spherical average of $v_r$. Its trace at $s=r$ is one, while $b_r(\delta)\to0$. The Gauss lemma, the volume-density comparison, and Cauchy--Schwarz give
\begin{align*}
 \operatorname{cap}_N(B_g(x,r))
 &\ge(1-C\delta^2)\omega_{n-1}\int_r^\delta s^{n-1}|b_r'(s)|^2\,ds\\
 &\ge(1-C\delta^2)\frac{\omega_{n-1}|1-b_r(\delta)|^2}
 {\displaystyle\int_r^\delta s^{1-n}\,ds}.
\end{align*}
First let $r\downarrow0$ and then $\delta\downarrow0$. Evaluating the denominator proves \eqref{gn:eq:ball-capacity} in both dimension ranges.
\end{proof}

\begin{corollary}[Complements of small geodesic balls]\label{gn:cor:ball-law}
As $r\downarrow0$,
\begin{equation}\label{gn:eq:ball-hole-law}
 C_{N\setminus\overline{B_g(x,r)}}\sim
 \begin{cases}
 V^{1-\alpha/2}\kappa_n^{-\beta/2}r^{-(n-2)\beta/2},&n\ge3,\\
 (2\pi)^{-\beta/2}|\log r|^{\beta/2},&n=2.
 \end{cases}
\end{equation}
\end{corollary}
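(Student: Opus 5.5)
The plan is to combine the capacity form of the sharp constant, \eqref{gn:eq:capacity-equivalent} in Theorem~\ref{gn:thm:small-holes}, with the ball capacity asymptotics of Lemma~\ref{gn:lem:ball-capacity}. For the hole $D_r=B_g(x,r)$ with fixed $x$, the complement $N\setminus\overline{B_g(x,r)}$ is connected for small $r$. Indeed, removing a small closed geodesic ball from a connected closed manifold of dimension $n\ge2$ leaves a connected set, since the geodesic sphere is connected for $r$ below the injectivity radius. The boundary is smooth for small $r$, so $D_r$ is an admissible smooth hole.

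The first step is to obtain $\operatorname{cap}_N(D_r)\to0$ as $r\downarrow0$ from Lemma~\ref{gn:lem:ball-capacity}. In both dimension ranges the right-hand side of \eqref{gn:eq:ball-capacity} tends to zero. By Proposition~\ref{gn:prop:capacity-spectral}, this gives $\lambda_{D_r}\to0$, and eventually $\lambda_{D_r}<\lambda_0$. Theorem~\ref{gn:thm:small-holes} then applies and yields
\[
 C_{N\setminus\overline{B_g(x,r)}}=V^{1-\alpha/2}\operatorname{cap}_N(D_r)^{-\beta/2}\bigl(1+o(1)\bigr).
\]

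The second step is to insert the asymptotic equivalence. Since $t\mapsto t^{-\beta/2}$ preserves asymptotic equivalence of positive quantities, for $n\ge3$ we get $\operatorname{cap}_N(D_r)^{-\beta/2}\sim\kappa_n^{-\beta/2}r^{-(n-2)\beta/2}$. This gives the first line of \eqref{gn:eq:ball-hole-law}. For $n=2$ we get $\operatorname{cap}_N(D_r)^{-\beta/2}\sim(2\pi)^{-\beta/2}|\log r|^{\beta/2}$.

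The only point needing care is the prefactor in dimension two, where the displayed formula omits $V^{1-\alpha/2}$. When $n=2$ we have $\beta=q-2$, so $\alpha=2$ and $V^{1-\alpha/2}=1$. This makes the two cases consistent. The main obstacle is therefore only verifying admissibility of the hole, namely smoothness and connected complement, which is routine for small $r$. All the analytic content is already contained in the earlier results.
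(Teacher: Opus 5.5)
Your proposal is correct and matches the paper's proof: the paper also combines the capacity asymptotic \eqref{gn:eq:capacity-equivalent} of Theorem~\ref{gn:thm:small-holes} with Lemma~\ref{gn:lem:ball-capacity}, and notes that $\alpha=2$ when $n=2$, so the volume factor is one. Your extra checks are left implicit in the paper and are the right ones to make: the smooth boundary, the connected complement, and $\lambda_{D_r}<\lambda_0$ via $\lambda_D\le\operatorname{cap}_N(D)/V$.
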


\begin{proof}
Combine Theorem~\ref{gn:thm:small-holes} and Lemma~\ref{gn:lem:ball-capacity}. In dimension two, $\alpha=2$, so the volume factor is one.
\end{proof}

\begin{remark}[Examples]\label{gn:rem:examples}
If $N$ is a flat torus and $M_r=N\setminus\overline{B_g(x,r)}$, then $C_{M_r}\to\infty$ as $r\downarrow0$, and the nonnegative normalized extremal is unique for sufficiently small $r$. In particular, attainment does not imply positive scalar curvature. Bounded Euclidean domains, on the other hand, have $C_M=C_*$ and no extremal.
\end{remark}

\section{The \texorpdfstring{$L^2$}{L2}-critical nonlinear Schr\"odinger energy}
\label{gn:sec:NLS}

Assume in this section that $q=2+4/n$. Then $\beta=2$, $\alpha=4/n$, and the energy and masses are defined by \eqref{gn:eq:NLS-energy} and \eqref{gn:eq:two-masses}. The same infimum is obtained for real or complex functions: passing from $u$ to $|u|$ preserves mass and the nonlinear integral and cannot increase the gradient energy.

\begin{proof}[Proof of Theorem~\ref{gn:thm:NLS}]
For a function of mass $m$, the definition of $W_M$ gives the exact identity
\begin{equation}\label{gn:eq:energy-quotient}
 \mathcal E_M(u)=\left(\frac12-\frac{m^{2/n}}qW_M(u)\right)\|\nabla u\|_2^2.
\end{equation}
The energy is therefore nonnegative for $m\le m_0(M)$. For $m>m_0(M)$, choose $v$ with $W_M(v)>q/(2m^{2/n})$ and multiply it by a constant to obtain mass $m$. The resulting function has negative energy.

Suppose $0<m<m_*$. Choose $\eta>0$ such that $(C_*+\eta)m^{2/n}<q/2$. The almost-sharp inequality gives, under the constraint $\|u\|_2^2=m$,
\begin{equation}\label{gn:eq:NLS-coercive}
 \mathcal E_M(u)\ge
 \left(\frac12-\frac{C_*+\eta}{q}m^{2/n}\right)\|\nabla u\|_2^2
 -\frac{B_\eta}{q}m^{q/2}.
\end{equation}
A minimizing sequence is therefore bounded in $H^1_0(M)$. Compactness into $L^2$ and $L^q$ preserves the mass and the nonlinear integral, while weak lower semicontinuity of the gradient term gives a minimizer.

For $m>m_*$, a compactly supported approximation to a suitable constant multiple of a Euclidean optimizer gives a function $f$ of mass $m$ with negative Euclidean energy. Apply $P_{x,\varepsilon}$ at an interior point $x$ and multiply by $1+O(\varepsilon^2)$ so that the resulting function has mass $m$. Local comparison gives
\[
 \mathcal E_M(P_{x,\varepsilon}f)
 =\varepsilon^{-2}\left(\frac12\|\nabla f\|_2^2-\frac1q\|f\|_q^q\right)+O(1),
\]
with the same leading term after normalization. Hence $I_M(m)=-\infty$.

For $C_M>C_*$, the interval $(m_0(M),m_*)$ has negative-energy minimizers by the preceding arguments. At $\beta=2$, \eqref{gn:eq:spectral-equivalent} reads $C_D=V^{-2/n}\lambda_D^{-1}(1+O(\lambda_D))$; substitution in \eqref{gn:eq:two-masses} gives \eqref{gn:eq:small-hole-mass}.
\end{proof}

\begin{corollary}[Zero and negative energy standing waves]\label{gn:cor:standing-waves}
If $C_M>C_*$, then $I_M(m_0(M))=0$ is attained by a suitably normalized GN extremal. For $m_0(M)\le m<m_*$, every nonnegative constrained minimizer is positive in $M^\circ$ and solves
\begin{equation}\label{gn:eq:standing-wave}
 -\Delta_g u+\omega u=u^{q-1},\qquad u|_{\partial M}=0,
\end{equation}
for some $\omega>0$. It gives the standing wave $\psi(t,z)=e^{i\omega t}u(z)$ for the focusing Dirichlet equation $i\partial_t\psi+\Delta_g\psi+|\psi|^{q-2}\psi=0$.
\end{corollary}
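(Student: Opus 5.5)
The plan is to relate the level set $m_0(M)$ to the Gagliardo--Nirenberg extremal through the exact identity \eqref{gn:eq:energy-quotient}, and then to obtain the Euler--Lagrange equation from a Lagrange multiplier argument.

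\emph{Attainment of $I_M(m_0)=0$.} Since $C_M>C_*$, Theorem~\ref{gn:thm:threshold} provides a nonnegative extremal $u$ with $W_M(u)=C_M$. Rescale it by a constant so that $\|u\|_2^2=m_0(M)$; $W_M$ is invariant under this rescaling. By \eqref{gn:eq:two-masses}, $m_0^{2/n}C_M/q=1/2$, so \eqref{gn:eq:energy-quotient} gives $\mathcal E_M(u)=0$. For any $v$ of mass $m_0$, $W_M(v)\le C_M$ gives $\mathcal E_M(v)\ge0$. Hence $I_M(m_0)=0$ and it is attained.

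\emph{Equation and positivity for $m_0\le m<m_*$.} Let $u\ge0$ be a constrained minimizer. Because $q<2^*$, the functional $\mathcal E_M$ and the constraint are $C^1$ on $H^1_0(M)$, and the constraint gradient $2u$ is nonzero. The Lagrange multiplier rule therefore gives some $\omega\in\mathbb R$ with $-\Delta_gu+\omega u=u^{q-1}$ weakly. Regularity and the strong maximum principle then follow exactly as in Proposition~\ref{gn:prop:extremal-equation}, since $u^{q-1}\ge0$ and the linear coefficient is bounded. This yields $C^2(\overline M)$ regularity and $u>0$ in $M^\circ$, because $u\not\equiv0$.

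\emph{Sign of $\omega$ (main obstacle).} Testing the equation with $u$ gives $\omega m=\|u\|_q^q-\|\nabla u\|_2^2$. Write $I=I_M(m)=\frac12\|\nabla u\|_2^2-\frac1q\|u\|_q^q\le0$, which holds because $m\ge m_0$. Then
\[
\omega m=\|u\|_q^q-2I-\tfrac2q\|u\|_q^q=\bigl(1-\tfrac2q\bigr)\|u\|_q^q-2I>0,
\]
since $q>2$, $I\le0$ and $\|u\|_q>0$. This is the delicate step: it needs the energy level to be nonpositive, which is exactly why the restriction $m\ge m_0(M)$ appears.

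\emph{Standing wave.} Finally, $\psi=e^{i\omega t}u$ satisfies $i\partial_t\psi=-\omega\psi$. Combined with the equation $\Delta_gu=\omega u-u^{q-1}$, this gives $i\partial_t\psi+\Delta_g\psi+|\psi|^{q-2}\psi=0$ with the Dirichlet condition, which is a direct check.
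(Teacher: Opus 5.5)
Your proof is correct and follows essentially the same route as the paper: the identity \eqref{gn:eq:energy-quotient} shows that $I_M(m_0(M))=0$ and that a rescaled extremal attains it. Testing the Euler--Lagrange equation with $u$ and using nonpositive energy then gives $\omega>0$ (your $\omega m=(1-2/q)\|u\|_q^q-2I$ is a rewriting of the paper's bound $\|u\|_q^q\ge(q/2)\|\nabla u\|_2^2$), and the maximum principle and direct substitution finish the argument.
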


\begin{proof}
At mass $m_0(M)$, the energy is nonnegative by \eqref{gn:eq:energy-quotient}, and a suitably normalized GN extremal makes it zero. For a constrained minimizer, the multiplier in \eqref{gn:eq:standing-wave} satisfies $\omega m=\|u\|_q^q-\|\nabla u\|_2^2$. Since $\|u\|_q^q\ge(q/2)\|\nabla u\|_2^2$ at nonpositive energy, $\omega>0$. Elliptic regularity and the maximum principle give positivity; substitution verifies the nonlinear Schr\"odinger equation.
\end{proof}

\begin{corollary}[Capacity and the threshold for negative energy]\label{gn:cor:capacity-mass}
For a family $M_D=N\setminus\overline D$ with connected complements and $\lambda_D\to0$,
\begin{equation}\label{gn:eq:capacity-mass}
 m_0(M_D)\sim\left(\frac q2\right)^{n/2}V^{1-n/2}
 \operatorname{cap}_N(D)^{n/2}.
\end{equation}
Consequently, for every fixed $m\in(0,m_*)$, all sufficiently small holes admit a negative-energy constrained minimizer of mass $m$.
\end{corollary}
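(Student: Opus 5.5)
The plan is to read the asymptotic off the small-hole expansion of $m_0$ in Theorem~\ref{gn:thm:NLS}, and then convert from $\lambda_D$ to capacity with Proposition~\ref{gn:prop:capacity-spectral}.

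First, recall that $q=2+4/n$ here, so $\beta=2$. Equation \eqref{gn:eq:small-hole-mass} gives
\[
 m_0(M_D)=\left(\tfrac q2\right)^{n/2}V\lambda_D^{n/2}\bigl(1+O(\lambda_D)\bigr).
\]
This can also be checked directly. At $\beta=2$, \eqref{gn:eq:spectral-equivalent} reads $C_D=V^{-2/n}\lambda_D^{-1}(1+O(\lambda_D))$. Inserting this into $m_0=(q/(2C_D))^{n/2}$ gives $(q/2)^{n/2}V\lambda_D^{n/2}(1+O(\lambda_D))$.

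Second, apply \eqref{gn:eq:capacity-spectral-equivalent}. Proposition~\ref{gn:prop:capacity-spectral} shows that $\lambda_D\to0$ forces $c_D=\operatorname{cap}_N(D)\to0$, and that $\lambda_D=(c_D/V)(1+O(c_D))$. Raising to the power $n/2$ gives
\[
 V\lambda_D^{n/2}=V^{1-n/2}c_D^{n/2}\bigl(1+O(c_D)\bigr).
\]
Combining the two steps yields \eqref{gn:eq:capacity-mass}, in fact with relative error $O(\operatorname{cap}_N(D))$, which is stronger than the stated $\sim$.

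Third, fix $m\in(0,m_*)$. Since $m_0(M_D)\to0$, we have $m>m_0(M_D)$ for all sufficiently small holes. Also $C_D>C_*$ once $\lambda_D<\lambda_0$, by Theorem~\ref{gn:thm:small-holes}. Theorem~\ref{gn:thm:NLS} then supplies a negative-energy constrained minimizer of mass $m$.

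There is no real obstacle. The only points needing care are bookkeeping: the exponent of $V$, namely $V\cdot V^{-n/2}=V^{1-n/2}$, and the fact that the hypothesis $\lambda_D\to0$ is exactly what both earlier results require.
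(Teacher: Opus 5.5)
Your proposal is correct and follows essentially the same route as the paper: combine \eqref{gn:eq:small-hole-mass} with \eqref{gn:eq:capacity-spectral-equivalent}, then use $m_0(M_D)\to0$ together with $C_D>C_*$ from Theorem~\ref{gn:thm:small-holes} and the negative-energy minimizer statement of Theorem~\ref{gn:thm:NLS}. Your sharper claim of relative error $O(\operatorname{cap}_N(D))$ is also valid, because $\lambda_D\le c_D/V$ lets you absorb the $O(\lambda_D)$ factor into $O(c_D)$.
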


\begin{proof}
Use \eqref{gn:eq:small-hole-mass} and \eqref{gn:eq:capacity-spectral-equivalent}. The last conclusion follows from $m_0(M_D)\to0$ and Theorem~\ref{gn:thm:NLS}.
\end{proof}

\section{Further applications}
\label{gn:sec:applications}

The sharp constants and stability estimates above also provide input for nonlinear evolution equations. We indicate two applications and the steps needed to develop them. We do not undertake the corresponding analysis of the evolution problems here.

\subsection{Fast diffusion and extinction estimates}

Consider the fast diffusion equation $\partial_t\rho=\Delta_g(\rho^m)$ with $\rho\ge0$ and $\rho^m\in H^1_0(M)$, where $0<m<1$. Choose $q=1+1/m$ and assume $q<2^*$; thus $m>(n-2)/(n+2)$ when $n\ge3$. In this subsection, $C_M$, $\alpha$, and $\beta$ refer to this exponent $q$. The natural starting point is the dissipation identity
\[
 Y(t):=\int_M\rho^{m+1}\,dV_g,\qquad
 Y'(t)=-(m+1)\int_M|\nabla(\rho^m)|^2\,dV_g.
\]
Applying the GN inequality to $f=\rho^m$ and then using $\|f\|_2\le\operatorname{Vol}(M)^{1/2-1/q}\|f\|_q$ gives
\begin{equation}\label{gn:eq:diffusion-dissipation}
 Y'(t)\le-(m+1)K_M Y(t)^{2m/(m+1)},\qquad
 K_M=C_M^{-2/\beta}\operatorname{Vol}(M)^{-2\alpha/(nq)}.
\end{equation}
For solutions satisfying the dissipation identity, integration yields the extinction-time upper bound $T_{\mathrm{ext}}\le Y(0)^{(1-m)/(m+1)}/((1-m)K_M)$. A complementary lower bound follows by testing the equation against the positive first Dirichlet eigenfunction $\Phi_1$, normalized by $\int_M\Phi_1\,dV_g=1$. Concavity of $s\mapsto s^m$ gives
\[
 \frac{d}{dt}\int_M\rho\Phi_1\,dV_g
 \ge-\lambda_1(M)\left(\int_M\rho\Phi_1\,dV_g\right)^m,
 \qquad
 T_{\mathrm{ext}}\ge
 \frac{\left(\int_M\rho(0)\Phi_1\,dV_g\right)^{1-m}}
 {(1-m)\lambda_1(M)}.
\]

For $M_D=N\setminus\overline D$ as in Theorem~\ref{gn:thm:small-holes}, the sharp-constant asymptotic and $\operatorname{Vol}(M_D)\to V$ give
\begin{equation}\label{gn:eq:diffusion-capacity}
 K_{M_D}\sim\lambda_D V^{(1-m)/(m+1)},\qquad
 \lambda_D^{-1}\sim\frac{V}{\operatorname{cap}_N(D)}.
\end{equation}
For initial data obtained by restricting a fixed bounded, nonnegative, nonzero function on $N$, the normalized eigenfunction converges to $V^{-1}$, so the lower bound has a positive limiting numerator. The two estimates therefore give the same order $\lambda_D^{-1}$ for the extinction time. This supplies a direct route from the GN asymptotic to quantitative dependence of extinction on hole capacity. A treatment of weak solutions, rescaled extinction profiles, and convergence rates would require further analysis. The use of interpolation and stability inequalities in entropy methods is developed in \cite{CarlenFigalli13,BDNSMemoir}.

\subsection{Stability of standing waves}

Return to $q=2+4/n$. Section~\ref{gn:sec:NLS} establishes the variational mass thresholds and existence of standing waves. These results also provide a starting point for studying dynamical stability. At the zero-energy mass on $M_D$, put $U_D=\sqrt{m_0(M_D)}u_D$. For $U=\sqrt{m_0(M_D)}f$ with $f\ge0$ and $\|f\|_2=1$, \eqref{gn:eq:energy-quotient} gives
\[
 \mathcal E_{M_D}(U)
 =\frac{m_0(M_D)}2\|\nabla f\|_2^2\delta_D(f).
\]
Together with $\|\nabla f\|_2^2\ge\lambda_D$, the deficit estimate in Theorem~\ref{gn:thm:small-holes} bounds $\|U-U_D\|_{H^1}^2$ by a constant times $\mathcal E_{M_D}(U)$ when the relative deficit is small. This gives quantitative control of nonnegative near-minimizers. To study orbital stability, one would combine compactness of constrained minimizing sequences with conservation of mass and energy for a well-posed Dirichlet flow, and account for the complex phase. The capacity law for $m_0(M_D)$ and the approximation of $u_D$ by $\varphi_D$ make the small-hole limit a natural setting for this analysis. We leave these dynamical questions for future work.

\subsection*{Acknowledgements}
The authors thank IIT Bombay for providing ideal working conditions.

\bibliographystyle{alpha}
\bibliography{GN}

\end{document}